\newcounter{count}[section]
\renewcommand{\thecount}{\arabic{section}.\arabic{count}}
\numberwithin{equation}{section}
\newenvironment{Umgeb1rekursiv}[1]
   {\vspace{0.5cm}
     \noindent
     \par\noindent
     \refstepcounter{count}
     \textbf{#1~\thecount}
     \hspace{0.2cm}
     \itshape
   }
   {\vspace{0.2cm}\par}
\newenvironment{Umgeb1}[1]
   {\vspace{0.5cm}
     \par\noindent
     \refstepcounter{count}
     \textbf{#1~\thecount}
     \hspace{0.2cm}
   }
   {\vspace{0.2cm}\par}
\newenvironment{prop}{\begin{Umgeb1rekursiv}{Proposition}}
  {\end{Umgeb1rekursiv}}
\newenvironment{lem}{\begin{Umgeb1rekursiv}{Lemma}}
  {\end{Umgeb1rekursiv}}
\newenvironment{corollary}{\begin{Umgeb1rekursiv}{Corollary}}
  {\end{Umgeb1rekursiv}}  
\newenvironment{theorem}{\begin{Umgeb1rekursiv}{Theorem}}
  {\end{Umgeb1rekursiv}}
\newenvironment{remark}{\begin{Umgeb1}{Remark}}
   {\end{Umgeb1}}
\newcommand{\R}{\mathbb{R}}
\newcommand{\C}{\mathbb{C}}
\newcommand{\N}{\mathbb{N}}
\newcommand{\id}{\operatorname{Id}}
\newcommand{\dm}{d}
\newcommand{\Res}{\operatorname{Res}}
\newcommand{\abs}[1]{\lvert#1\rvert}
\renewcommand{\part}[2]{\frac{\partial #1}{\partial #2}}
\def\st{\stackrel{\text{def}}{=}}
\title{A family of Riesz distributions for differential forms on Euclidian space}
\begin{document}
\author{Matthias Fischmann and Bent {\O}rsted}

\address{Departmant of Mathematics, Ny Munkegade 118, 8000 Aarhus C, Denmark}

\thanks{Research supported by the Department of Mathematics (Aarhus University) and the Danish Research Council}

\email{fischmann@math.au.dk, orsted@math.au.dk}

\keywords{Riesz distribution, Differential form, Integral operator, Intertwining operator, Fourier transform, Residue, 
  Weigthed Laplacian, Beurling-Ahlfors operator}

\subjclass[2010]{46F10; 47B06, 31B10, 53A30}

\begin{abstract}
  In this paper we introduce a new family of operator-valued distributions
  on Euclidian space acting by convolution on differential forms. It provides a natural
  generalization of the important Riesz distributions acting on functions,
  where the corresponding operators are $(-\Delta)^{-\alpha/2}$, and we develop 
  basic analogous properties with respect to meromorphic continuation, residues,
  Fourier transforms, and relations to conformal geometry and representations
  of the conformal group.
\end{abstract}
\maketitle

\tableofcontents
\allowdisplaybreaks

%%%%%%%%%%%%%%%%%%%%%%%%%%%%%%%%%%
%%%
%%%%%%%%%%%%%%%%%%%%%%%%%%%%%%%%%%

\section{Introduction}
  Important aspects of analysis in Euclidian space are studied via the Riesz distributions
  $|x|^{\lambda}$, see \cite{Riesz}, i.e. the complex powers of the Euclidian norm. As convolution operators 
  these allow a rigorous treatment of the Laplace operator $\Delta$ and its complex powers
  $I_{\alpha} = (-\Delta)^{-\alpha/2}$, acting on functions
as a natural semigroup, and there are many classical and recent results related
  to this family of operators, sometimes referred to as fractional Laplacians. 
  
  Generalizations of those ideas to norms induced by indefinite metrics or spinor valued distributions 
  where studied in \cite{KV} and \cite{CO}.
  
  In this paper we introduce a natural family of similar operators acting on differential forms
  in Euclidian space; however, the semigroup property (as above) has to be relaxed.
  The corresponding family of distributions seems to deserve the name of
  Riesz distributions for differential forms, and it allows a certain extra flexibility in the complex
  parameters involved. We shall start by giving the basic definitions and calculations in the
  most general cases, and later specialize to some specific cases of particular interest.
  One of the important special cases is not new by any means, since it corresponds to
  some of the cases of the intertwining operators treated by Knapp and Stein in detail, when they
  introduced their celebrated kernel operators \cite{KnappStein}. Here our 
  calculations make these operators \ref{eq:Riesz2} more explicit; in particular we give the Euclidian Fourier
  transform of these Knapp-Stein operators and simplifying proofs for the unitarity of the
  complementary series of representations of the conformal group. For this 
  see Remark \ref{ComplementarySeries}. Another interesting case which arises by specializing 
  parameters in our family is the Beurling-Ahlfors operator $S$ 
  (in even dimension, acting on forms of middle degree), see \cite{IM}; this operator 
  is in several ways an analogue of the Hilbert transform.

  In summary, we shall for the Riesz distributions for differential forms \eqref{eq:GeneralRiesz} be interested in their
  \begin{itemize}
    \item Fourier transforms (Theorem \ref{FourierGeneralRiesz})
    \item Bernstein-Sato identities (Theorem \ref{BernsteinSatoGeneralRiesz})
    \item residues (Theorem \ref{ResiduesGeneralRiesz})
    \item convolution formulas (Theorem \ref{ConvolutionIndentityGeneralRiesz})
    
  \end{itemize}
  and for example obtain the Branson-Gover operators \cite{BransonGover}
  \begin{align*}
    L^{(p)}_{2N} = (\frac{n}{2} - p + N)(\delta d)^N + (\frac{n}{2} - p - N)(d \delta)^N
  \end{align*}
  on $p$-forms in Euclidian space (well-known from conformal geometry) as residues, see Corollary \ref{ResiduesRiesz2}. 
  Here $d$ is the usual
  derivative on forms and $\delta$ its $L^2$-adjoint, and these (and their symbols) are
  the basic building blocks in our study. 
  This corresponds well with the conjectural fact that in general
  (on general Riemannian manifolds) the Branson-Gover operators may be obtained as
  residues of the scattering operator \cite{AG, GZ}. 
  While we shall carry out most of the
  analysis in Euclidian space, it is also possible to work on the conformal compactification,
  i.e. the standard sphere of the same dimension; for this we shall apply some of the
  formulas in \cite{BOO}, where the so-called compact picture of the bundles in question
  are studied.

  We see the present paper as laying the groundwork for further studies of Euclidian analysis
  on differential forms, such as for example Sobolev inequalities, 
  giving the basic facts towards finding fundamental solutions
  of natural differential operators on differential forms, and elliptic boundary value problems.
  Also we hope our study might contribute to the branching program of T. Kobayashi, in
  particular to his theory of symmetry-breaking operators and branching problems for
  complementary series representations.

%%%%%%%%%%%%%%%%%%%%%%%%%%%%%%%%%%%%%%%%%%%%%%%%%%%%%%%%%%%
%%%%%%%%%%%%%%%%%%%%%%%%%%%%%%%%%%%%%%%%%%%%%%%%%%%%%%%%%%%
%%%%%%%%%%%%%%%%%%%%%%%%%%%%%%%%%%%%%%%%%%%%%%%%%%%%%%%%%%%
\section{Preliminaries about differential forms}
  Let $(\R^n,\langle\cdot,\cdot\rangle)$ be the Euclidian vector space of dimension $n\in\N$ and 
  $\{e_j\}_{j=1}^n$ its standard basis. 
  The space of differential $p$-forms on $\R^n$, for $0\leq p\leq n$, is denoted by 
  $\Omega^p(\R^n)$. The Euclidian scalar product extends to $\Omega^p(\R^n)$ and will be denoted with 
  the same symbol. We introduce two 
  algebraic actions on $\omega\in\Omega^p(\R^n)$ which will be important: For $x\in\R^n$ one defines 
  \begin{align*}
    i_x\omega&\st\sum_{k=1}^nx_k i_{e_k}\omega,\quad
    \varepsilon_x\omega\st\sum_{k=1}^n x_k e_k\wedge \omega.
  \end{align*}
  Mainly these are the interior and exterior product by $x\in\R^n$ and are 
  related as symbols to
  the exterior differential $\dm:\Omega^p(\R^n)\to\Omega^{p+1}(\R^n)$ 
  and the negative of the co-differential $\delta:\Omega^p(\R^n)\to\Omega^{p-1}(\R^n)$, the $L^2$-adjoint of the differential.
  They satisfy the well known identities:
  \begin{lem}\label{Identities1}
    The algebraic actions $i_x$ and $\varepsilon_x$ are nilpotent of degree $2$ and
    formally adjoint to each other with respect to the scalar product 
    $\langle\cdot,\cdot\rangle$ on $\Omega^p(\R^n)$, 
    i.e. $\langle i_x\omega,\eta\rangle=\langle\omega ,\varepsilon_x \eta \rangle$ 
    for all $x\in\R^n$ and $\omega\in\Omega^p(\R^n)$, $\eta\in\Omega^{p-1}(\R^n)$. 
    Furthermore, it holds
    \begin{align*}
      i_x\varepsilon_x&=\sum_{k,l=1}^nx_k x_l i_{e_k}(e_l\wedge\cdot),\quad
      \varepsilon_xi_x=\sum_{k,l=1}^nx_k x_l e_l\wedge i_{e_k}(\cdot),\quad
      i_x\varepsilon_x+\varepsilon_xi_x=\sum_{k=1}^n x_k^2.
    \end{align*}
  \end{lem}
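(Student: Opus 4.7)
The plan is to reduce every claim to the corresponding identities for the basis operators $i_{e_k}$ and $e_l \wedge (\cdot)$, which are entirely standard facts of exterior algebra, and then to obtain the assertions for $i_x$ and $\varepsilon_x$ by bilinear expansion in the coordinates $x_k$ of $x = \sum_k x_k e_k$.

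For nilpotence, I would write
\begin{align*}
  i_x^2 = \sum_{k,l=1}^n x_k x_l\, i_{e_k} i_{e_l}, \qquad \varepsilon_x^2 = \sum_{k,l=1}^n x_k x_l\, e_k \wedge (e_l \wedge \cdot),
\end{align*}
and note that $i_{e_k} i_{e_l} = -i_{e_l} i_{e_k}$ and $e_k \wedge e_l \wedge \cdot = -e_l \wedge e_k \wedge \cdot$ are antisymmetric in $(k,l)$, while $x_k x_l$ is symmetric; so both sums collapse to zero.

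For the adjointness, the clean route is to verify on the orthonormal basis of $\Omega^p(\R^n)$ consisting of $e_{i_1}\wedge\cdots\wedge e_{i_p}$ that $\langle i_{e_k} \alpha, \beta\rangle = \langle \alpha, e_k \wedge \beta\rangle$ for $\alpha \in \Omega^p(\R^n)$ and $\beta \in \Omega^{p-1}(\R^n)$; this is the standard statement that interior and exterior product by a vector are mutually adjoint, and it reduces to an elementary check on basis monomials. Then $\langle i_x \omega, \eta\rangle = \sum_k x_k \langle i_{e_k}\omega,\eta\rangle = \sum_k x_k \langle \omega, e_k \wedge \eta\rangle = \langle \omega, \varepsilon_x \eta\rangle$.

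The anticommutation relation follows from the classical identity
\begin{align*}
  i_{e_k}(e_l \wedge \omega) + e_l \wedge i_{e_k}\omega = \delta_{kl}\, \omega,
\end{align*}
which again is a direct basis computation. Multiplying by $x_k x_l$ and summing over $k,l$ yields
\begin{align*}
  i_x \varepsilon_x + \varepsilon_x i_x = \sum_{k,l=1}^n x_k x_l\, \delta_{kl}\, \id = \sum_{k=1}^n x_k^2,
\end{align*}
while the first two displayed formulas for $i_x \varepsilon_x$ and $\varepsilon_x i_x$ are simply the definitions unpacked. No step is really an obstacle here; the only mild care required is the symmetry-versus-antisymmetry argument for the nilpotence, and keeping the orthonormal-basis adjointness statement separated from the bilinear extension in $x$.
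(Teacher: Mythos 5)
Your proof is correct. The paper itself offers no proof of this lemma --- it simply labels these as ``well known identities'' --- and your argument (bilinear expansion in the coordinates of $x$, antisymmetry of $i_{e_k}i_{e_l}$ and $e_k\wedge e_l\wedge\cdot$ against the symmetry of $x_kx_l$ for nilpotence, adjointness checked on the orthonormal monomial basis, and the relation $i_{e_k}(e_l\wedge\omega)+e_l\wedge i_{e_k}\omega=\delta_{kl}\,\omega$ for the anticommutator) is exactly the standard verification the authors implicitly rely on.
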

  In the next section it will be important to know some differential actions on 
  the distribution 
  \begin{align}\label{eq:RieszDistribution}
    r^\lambda(x)\st (x_1^2+\cdots+x_n^2)^{\frac \lambda 2}
  \end{align}
  defined for $x\in\R^n$ and $\lambda\in\C$ with $\Re(\lambda)>-n$. This distribution is termed 
  {\it Riesz distribution} and was for example studied in \cite{Riesz,GelfandShilov}. 

  The following is the key lemma in order to obtain the Fourier transform of the Riesz distribution for differential forms. 
  \begin{lem}\label{Identities2}
    Let $\beta$ be a constant $p$-form. Then, for fixed $x\in\R^n$, 
    \begin{align*}
      \delta\dm(r^{\lambda+2}(x-y)\beta)
        &= -(\lambda+2)(n-p)r^{\lambda}(x-y)\beta
        -(\lambda+2)\lambda r^{\lambda-2}(x-y)i_{x-y}\varepsilon_{x-y}\beta ,\\
      \dm\delta(r^{\lambda+2}(x-y)\beta)
        &= -(\lambda+2)pr^{\lambda}(x-y)\beta
        -(\lambda+2)\lambda r^{\lambda-2}(x-y)\varepsilon_{x-y}i_{x-y}\beta.
    \end{align*}
    Derivatives are taken with respect to the $y$-variable. 
  \end{lem}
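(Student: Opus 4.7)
The plan is a direct computation, differentiating $r^{\lambda+2}(x-y)\beta$ in $y$ twice and then repackaging the resulting sums via the identities of Lemma \ref{Identities1}.

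First I would record the elementary identity
\begin{align*}
  \partial_{y_k} r^{\mu}(x-y) = -\mu\,(x_k-y_k)\,r^{\mu-2}(x-y),
\end{align*}
which follows from $\partial_{y_k}|x-y|^2 = -2(x_k-y_k)$. Writing $z = x-y$ and abbreviating $r = r(x-y)$, the Leibniz rule for a constant form $\beta$ gives
\begin{align*}
  \dm(r^{\lambda+2}\beta) &= -(\lambda+2)\,r^{\lambda}\,\varepsilon_{z}\beta, \\
  \delta(r^{\lambda+2}\beta) &= (\lambda+2)\,r^{\lambda}\,i_{z}\beta,
\end{align*}
where I use that on a constant $p$-form $\omega$ one has $\dm(h\omega) = \sum_k (\partial_{y_k}h)\, e_k\wedge\omega$ and $\delta(h\omega) = -\sum_k (\partial_{y_k}h)\, i_{e_k}\omega$, and the sign flip from the chain rule produces $\varepsilon_{z}$ and $i_{z}$ respectively.

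Next I apply $\delta$, resp.\ $\dm$, a second time. Since $\varepsilon_z\beta = \sum_k z_k\,e_k\wedge\beta$, I split the job into terms of the form $(r^{\lambda} z_k)\,(e_k\wedge\beta)$ with $e_k\wedge\beta$ constant, and apply the same Leibniz formulas. Differentiating the scalar factor gives
\begin{align*}
  \partial_{y_j}\bigl(r^{\lambda}\,z_k\bigr) = -\lambda\, z_j z_k\, r^{\lambda-2} - \delta_{jk}\, r^{\lambda}.
\end{align*}
Collecting the $-\lambda z_j z_k r^{\lambda-2}$ pieces in the $\delta\dm$ computation yields the double sum $\sum_{j,k} z_j z_k\, i_{e_j}(e_k\wedge\beta)$, which by Lemma \ref{Identities1} is exactly $i_{z}\varepsilon_{z}\beta$. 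The $\delta_{jk}r^{\lambda}$ pieces contribute $r^{\lambda}\sum_k i_{e_k}(e_k\wedge\beta)$, which equals $(n-p)\,r^{\lambda}\beta$ because the anticommutation relation $i_{e_k}\varepsilon_{e_k}+\varepsilon_{e_k}i_{e_k}=1$ combined with $\sum_k \varepsilon_{e_k}i_{e_k}\beta = p\beta$ gives $\sum_k i_{e_k}\varepsilon_{e_k}\beta = (n-p)\beta$. Assembling the signs produces the claimed formula for $\delta\dm(r^{\lambda+2}\beta)$.

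The calculation of $\dm\delta(r^{\lambda+2}\beta)$ is completely parallel, starting from $\delta(r^{\lambda+2}\beta) = (\lambda+2)r^{\lambda}i_{z}\beta = (\lambda+2)r^{\lambda}\sum_k z_k\, i_{e_k}\beta$ and applying $\dm$ term by term; here the analogous double sum recombines to $\varepsilon_{z}i_{z}\beta$ and the diagonal terms yield $p$ via $\sum_k \varepsilon_{e_k}i_{e_k}\beta = p\,\beta$. The only real bookkeeping hazard is tracking signs through the two differentiations and confirming the order of $i_{z}$ and $\varepsilon_{z}$ in the two final expressions; both are dictated by whether the operator that hit the scalar first was $\dm$ (producing $\varepsilon$ on the outside before $i$) or $\delta$ (producing $i$ on the outside before $\varepsilon$), which is precisely the structure visible in the claim.
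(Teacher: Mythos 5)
Your proposal is correct and follows essentially the same route as the paper: both are direct Leibniz-rule computations reducing $\delta\dm$ and $\dm\delta$ of $r^{\lambda+2}(x-y)\beta$ to the double sums $\sum_{j,k}z_jz_k\,i_{e_j}(e_k\wedge\beta)$ and $\sum_{j,k}z_jz_k\,e_j\wedge i_{e_k}\beta$ plus the diagonal traces, which are then identified via Lemma \ref{Identities1} and the identities $\sum_k i_{e_k}(e_k\wedge\beta)=(n-p)\beta$, $\sum_k e_k\wedge i_{e_k}\beta=p\beta$. The only (immaterial) difference is that you apply the two first-order operators sequentially, whereas the paper writes the composition at once as $-\sum_{k,l}\partial_l\partial_k(r^{\lambda+2})\,i_{e_k}(e_l\wedge\cdot)$; your signs and the order of $i_z$ versus $\varepsilon_z$ all come out right.
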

  \begin{proof}
    In order to prove the lemma we first observe 
    \begin{align}\label{eq:Observation}
      \partial_k r^\nu(x-y)&=-\nu (x_k-y_k)r^{\nu-2}(x-y),\notag\\
        \sum_{k=1}^n e_k\wedge i_{e_k}\beta&=p\beta,\quad
        \sum_{k=1}^n i_{e_k}(e_k\wedge \beta)=(n-p)\beta.
    \end{align}
    Then a straightforward computation shows 
    \begin{align*}
      \delta\dm(r^{\lambda+2}(x-y)\beta)
        &=-\sum_{k,l=1}^n \partial_l\partial_k (r^{\lambda+2}(x-y))
        i_{e_k}(e_l\wedge \beta) \\
      &=-(\lambda+2)r^\lambda(x-y) \sum_{k=1}^n i_{e_k}(e_k\wedge \beta) \\
      &-\lambda(\lambda+2)r^{\lambda-2}(x-y) \sum_{k,l=1}^n (x_k-y_k)(x_l-y_l)
        i_{e_k}(e_l\wedge \beta) 
    \end{align*}
    Using the above observations \eqref{eq:Observation} and Lemma \ref{Identities1} we may conclude 
    \begin{align*}
      \delta\dm(r^{\lambda+2}(x-y)\beta)= -(\lambda+2)(n-p)r^{\lambda}(x-y)
        \beta-(\lambda+2)\lambda r^{\lambda-2}(x-y)i_{x-y}\varepsilon_{x-y}\beta.
    \end{align*}
    The second claim runs along the same line, which completes the proof.
  \end{proof}
  
  Let us denote by $\mathcal{S}^p(\R^n)\st \mathcal{S}(\R^n)\otimes \Lambda^p(\R^n)$ the 
  space of Schwartz functions with value in $\Lambda^p(\R^n)^*$. 
  We follow the convention for the Fourier 
  transform \cite{GelfandShilov} on Schwartz functions $f\in\mathcal{S}(\R^n)$:
  \begin{align*}
    \mathcal{F}(f)(\xi)\st \int_{\R^n} f(x)e^{i \langle x, \xi\rangle}dx,
  \end{align*}
  and extend it to a Fourier transform on 
  $\omega\in\mathcal{S}^p(\R^n)$ by acting on the coefficients of $\omega$ in some arbitrarily chosen basis. Our 
  normalization of the Fourier transform is chosen in such a way that 
  $\mathcal{F}(\delta_0)=1$, where $\delta_0$ is the Dirac-distribution centered at the origin. 
  Recall that for a polynomial $P$ in $n$ variables we have the identities
  \begin{align}
    P(\partial_{\xi_1},\ldots,\partial_{\xi_n})\mathcal{F}(\omega)(\xi)
      &=\mathcal{F}(P(i x_1,\ldots,i x_n)\omega)(\xi),\notag\\
    \mathcal{F}(P(\partial_{x_1},\ldots,\partial_{x_n})\omega)(\xi)
      &=P(-i \xi_1,\ldots,-i \xi_n)\mathcal{F}(\omega)(\xi)\label{eq:FourierProperties2}
  \end{align}
  for $\omega\in\mathcal{S}^p(\R^n)$.

%%%%%%%%%%%%%%%%%%%%%%%%%%%%%%%%%%%%%%%%%%%%%%%%%%%%%%%%%%%
\section{A class of Riesz distributions for differential forms}
  We introduce a class of tempered distributions 
  $R_{A_\lambda,B_\lambda}^\lambda(x)$ valued in 
  endomorphisms of differential forms, which generalizes the 
  Riesz distribution \eqref{eq:RieszDistribution}. 
  Based on an explicit formula for its Fourier transform, we give a
  corresponding Bernstein-Sato identity, compute its residues and some identities concerning their convolutions. Finally, 
  this distribution when acting by convolution on differential forms induces an integral operator.  

  Let $A_\lambda,B_\lambda$ be holomorphic functions in $\lambda\in\C$. 
  For $\lambda\in \C$ with $\Re(\lambda)>-n$ a distribution valued in the endomorphisms of differential forms, 
  termed {\it Riesz distribution for differential forms}, is defined by
  \begin{align}\label{eq:GeneralRiesz}
    R_{A_\lambda,B_\lambda}^\lambda(x)\st r^{\lambda-2}(x)
     (A_\lambda i_x \varepsilon_x  
      + B_\lambda\varepsilon_x i_x).
  \end{align}
  The parameters $A_\lambda,B_\lambda$ can also depend on the form-degree 
  $p$ and the dimension $n$, but we will suppress such 
  dependencies. 
  \begin{remark}
    One could also assume that $A_\lambda$ and $B_\lambda$ are meromorphic in $\lambda$ without 
    disturbing the results obtained here. Of course, possible poles of $A_\lambda$ and $B_\lambda$ 
    will influence the assumtions and statements in this paper.
  \end{remark}
  %%new---skip
   %The distribution $R_{A_\lambda,B_\lambda}^\lambda(x)$ is holomorphic in $\lambda$ such that 
   %$\Re(\lambda)>-n$. We will see 
   %later, that it has a meromorphic extension to the complex plane with simple poles at 
   %$\lambda=-n-2\N_0$. 

   %???????????What about uniquness of $R_{A_\lambda,B_\lambda}^\lambda(x)$?????????
  %%new---skip
  
%%%%%%%%%%%%%%%%%%%%%%%%%%%%%%%%%%%%%%%%%%%%%%%%%%%%%%%%%%%
 \subsection{Fourier transform} 
  It is well known that the Fourier transform, a special version of an integral transform, plays an 
  important rule in the analysis of functions. Into that branch falls for example the study of solutions of 
  differential equations, detecting possible poles and computing their residues. 
  
  In terms of $A_\lambda,B_\lambda$ we define holomorphic functions 
  \begin{align}\label{eq:Coefficients1}
    C_\lambda\st (\lambda+p)A_\lambda-pB_\lambda,
      \quad D_\lambda\st -(n-p)A_\lambda
      +(\lambda+n-p)B_\lambda.
  \end{align}
  
  The next theorem states that the Fourier transform $\mathcal{F}$ preserves the class 
  of distribution $R^\lambda_{A_\lambda,B_\lambda}(x)$ 
  by changing (in general) the parameters $A_\lambda,B_\lambda$. 
  \begin{theorem}\label{FourierGeneralRiesz}
    The Fourier transform of $R_{A_\lambda,B_\lambda}^\lambda(x)$ is given by 
    \begin{align}\label{eq:FourierGeneralRiesz}
      \mathcal{F}(R_{A_\lambda,B_\lambda}^{\lambda})(\xi)
        = -2^{\lambda+n-1}\pi^{\frac n2}
         \frac{\Gamma(\frac{\lambda+n}{2})}{\Gamma(-\frac{\lambda-2}{2})} 
         R_{C_\lambda,D_\lambda}^{-\lambda-n}(\xi),
    \end{align}
    where $C_\lambda,D_\lambda$ are given by \eqref{eq:Coefficients1}.
  \end{theorem}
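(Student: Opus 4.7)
The plan is to reduce the Fourier transform of $R_{A_\lambda,B_\lambda}^\lambda$ to the classical Riesz formula for $\mathcal{F}(r^\mu)$, using Lemma \ref{Identities2} to relocate the two tensorial ingredients $r^{\lambda-2}\,i_x\varepsilon_x$ and $r^{\lambda-2}\,\varepsilon_xi_x$ to differential-operator symbols that the Fourier transform handles cleanly. Concretely, evaluating Lemma \ref{Identities2} at $x=0$ (legitimate because $r$ is even and $i_{-y}\varepsilon_{-y}=i_y\varepsilon_y$) and solving the two identities for these ingredients applied to a constant $p$-form $\beta$ gives
\begin{align*}
r^{\lambda-2}(y)\,i_y\varepsilon_y\beta&=-\frac{\delta\dm(r^{\lambda+2}(y)\beta)}{\lambda(\lambda+2)}-\frac{n-p}{\lambda}\,r^\lambda(y)\beta,\\
r^{\lambda-2}(y)\,\varepsilon_yi_y\beta&=-\frac{\dm\delta(r^{\lambda+2}(y)\beta)}{\lambda(\lambda+2)}-\frac{p}{\lambda}\,r^\lambda(y)\beta.
\end{align*}
Thus $R_{A_\lambda,B_\lambda}^\lambda(y)\beta$ becomes a linear combination of $\delta\dm(r^{\lambda+2}\beta)$, $\dm\delta(r^{\lambda+2}\beta)$, and $r^\lambda\beta$, all of which have tractable Fourier transforms.

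I then Fourier transform term by term. With $\dm=\sum_je_j\wedge\partial_j$ and $\delta=-\sum_ji_{e_j}\partial_j$ as used in the proof of Lemma \ref{Identities2}, the identities \eqref{eq:FourierProperties2} imply that $\delta\dm$ corresponds under $\mathcal{F}$ to multiplication by $i_\xi\varepsilon_\xi$ and $\dm\delta$ to $\varepsilon_\xi i_\xi$. The scalar inputs are handled by the classical Riesz formula $\mathcal{F}(r^\mu)(\xi)=2^{\mu+n}\pi^{n/2}\Gamma((\mu+n)/2)/\Gamma(-\mu/2)\,r^{-\mu-n}(\xi)$ from \cite{GelfandShilov}, applied at $\mu=\lambda$ and $\mu=\lambda+2$ and related via $\Gamma(z+1)=z\Gamma(z)$. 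Writing $K:=2^{\lambda+n}\pi^{n/2}\Gamma((\lambda+n)/2)/\Gamma(-\lambda/2)$, the $\mu=\lambda+2$ contribution produces an extra factor $-(\lambda+n)(\lambda+2)$ that cancels the $(\lambda+2)$ in the denominators above.

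What remains is an algebraic cleanup. Using $i_\xi\varepsilon_\xi+\varepsilon_\xi i_\xi=r^2(\xi)$ from Lemma \ref{Identities1}, the plain $r^\lambda\beta$ contribution is absorbed into the same $r^{-\lambda-n-2}(\xi)$-prefactor as the $\delta\dm$ and $\dm\delta$ contributions. Collecting the coefficients of $i_\xi\varepsilon_\xi$ and $\varepsilon_\xi i_\xi$ yields precisely $C_\lambda=(\lambda+p)A_\lambda-pB_\lambda$ and $D_\lambda=-(n-p)A_\lambda+(\lambda+n-p)B_\lambda$ from \eqref{eq:Coefficients1}, so the Fourier transform takes the form $(K/\lambda)\,R_{C_\lambda,D_\lambda}^{-\lambda-n}(\xi)$. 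The identity $\Gamma(1-\lambda/2)=-(\lambda/2)\Gamma(-\lambda/2)$ then rewrites $K/\lambda$ as the normalization $-2^{\lambda+n-1}\pi^{n/2}\Gamma((\lambda+n)/2)/\Gamma(-(\lambda-2)/2)$ appearing in \eqref{eq:FourierGeneralRiesz}.

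The main obstacle is bookkeeping rather than conceptual: one must track the signs produced by the Fourier symbols of $\dm$ and $\delta$ consistently with the sign conventions of Lemma \ref{Identities2}, and weave the two Gamma-function ratios at $\mu=\lambda$ and $\mu=\lambda+2$ so that the coefficients of $i_\xi\varepsilon_\xi$ and $\varepsilon_\xi i_\xi$ collapse to $C_\lambda$ and $D_\lambda$. The identity then extends by meromorphic continuation to all $\lambda\in\C$ with $\Re(\lambda)>-n$, since every ingredient (the Riesz formula, Lemma \ref{Identities2}, and the Fourier-transform conventions) admits such continuation.
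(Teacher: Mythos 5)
Your argument is correct and is essentially the paper's own proof: both rest on Lemma \ref{Identities2} to trade $r^{\lambda-2}i_x\varepsilon_x$ and $r^{\lambda-2}\varepsilon_xi_x$ for $\delta\dm$ and $\dm\delta$ applied to $r^{\lambda+2}$, then invoke the scalar Gelfand--Shilov formula for $\mathcal{F}(r^\mu)$ and collect coefficients into $C_\lambda,D_\lambda$. The only difference is organizational -- you invert the two identities of Lemma \ref{Identities2} and transform forward, absorbing the extra $r^\lambda$ term via $i_\xi\varepsilon_\xi+\varepsilon_\xi i_\xi=r^2(\xi)$, whereas the paper posits the answer with undetermined coefficients $\tilde{C}_\lambda,\tilde{D}_\lambda$ and solves the resulting $2\times2$ linear system -- and your Gamma-function bookkeeping reproduces the stated constant exactly.
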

  \begin{proof}
    For $\omega\in\mathcal{S}^p(\R^n)$ it is enough to verify that both expressions
    \begin{align*}
      \mathcal{F}\Big(\int_{\R^n}r^{\lambda-2}(x-y)
       (A_\lambda i_{x-y} \varepsilon_{x-y} + B_\lambda\varepsilon_{x-y} i_{x-y})
       \omega(y)\dm y\Big)(\xi),
    \end{align*}
    which is actually $\mathcal{F}(R_{A_\lambda,B_\lambda}^{\lambda} * \omega)(\xi)$, and 
    \begin{align*}
      r^{-\lambda-2-n}(\xi)(\tilde{C}_\lambda i_\xi\varepsilon_\xi 
         +\tilde{D}_\lambda \varepsilon_\xi i_\xi)\mathcal{F}(\omega)(\xi),
    \end{align*}
    for $\tilde{C}_\lambda,\tilde{D}_\lambda\in\C$ to be determined, 
    agree up to a constant. We start to compute the latter one. First note the identities:
    \begin{align*}
      r^{-\lambda-2-n}(\xi)&=c_\lambda^{-1}\mathcal{F}(r^{\lambda+2})(\xi),\\
      (\tilde{C}_\lambda i_\xi\varepsilon_\xi 
         +\tilde{D}_\lambda \varepsilon_\xi i_\xi)\mathcal{F}(\omega)(\xi)
         &=\mathcal{F}\big((\tilde{C}_\lambda\delta\dm
         +\tilde{D}_\lambda\dm\delta)\omega\big)(\xi),
    \end{align*}
    see \cite[Chapter II, Section $3.3$]{GelfandShilov} and \eqref{eq:FourierProperties2}, and 
    \begin{align}\label{eq:FourierRieszConstant}
      c_\lambda\st 2^{\lambda+2+n}\pi^{\frac n2}\frac{\Gamma(\frac{\lambda+2+n}{2})}{\Gamma(-\frac{\lambda+2}{2})},
    \end{align} 
    Hence we obtain 
    \begin{align*}
      r^{-\lambda-2-n}(\xi)(\tilde{C}_\lambda i_\xi\varepsilon_\xi 
         +\tilde{D}_\lambda \varepsilon_\xi i_\xi)\mathcal{F}(\omega)(\xi)
        = c_\lambda^{-1}\mathcal{F}\big(\int_{\R^n} 
        r^{\lambda+2}(x-y)(\tilde{C}_\lambda\delta\dm
        +\tilde{D}_\lambda\dm\delta)\omega \dm y \big)(\xi).
    \end{align*}
    Taking a constant $p$-form $\beta$ we have by partial integration 
    \begin{multline}\label{eq:help1}
      \int_{\R^n}\langle \beta,r^{\lambda+2}(x-y)(\tilde{C}_\lambda\delta\dm
        +\tilde{D}_\lambda\dm\delta)\omega \rangle \dm y\\
      = \int_{\R^n}\langle(\tilde{C}_\lambda\delta\dm
        +\tilde{D}_\lambda\dm\delta)( r^{\lambda+2}(x-y)\beta),\omega \rangle \dm y.
    \end{multline}
    By Lemma \ref{Identities2} we find 
    \begin{align*}
      \int_{\R^n}\langle \delta\dm(r^{\lambda+2}(x-y)\beta),\omega\rangle \dm y
      =-(\lambda+2)\int_{\R^n}\langle\beta&, (n-p) r^{\lambda}(x-y)\omega\\
      &+\lambda r^{\lambda-2}(x-y)
        i_{x-y}\varepsilon_{x-y}\omega\rangle\dm y,\\
      \int_{\R^n}\langle \dm\delta(r^{\lambda+2}(x-y)\beta),\omega\rangle \dm y
      =-(\lambda+2)\int_{\R^n}\langle\beta&, pr^{\lambda}(x-y)\omega\\
      &+\lambda r^{\lambda-2}(x-y)
       \varepsilon_{x-y}i_{x-y}\omega\rangle\dm y. 
    \end{align*}
    In turn, Equation \eqref{eq:FourierGeneralRiesz} is equivalent by 
    collecting  the coefficients of $r^{\lambda-2}(x-y)i_{x-y}\varepsilon_{x-y}$ 
    and $r^{\lambda-2}(x-y)\varepsilon_{x-y}i_{x-y}$ in \eqref{eq:help1} to the following 
    linear system for $\tilde{C}_\lambda, \tilde{D}_\lambda$:
    \begin{align}\label{eq:CoeffRelationsForFourier}
      (\lambda+n-p)\tilde{C}_\lambda+p \tilde{D}_\lambda=A_\lambda,\notag\\ 
      (n-p)\tilde{C}_\lambda+(\lambda+p)\tilde{D}_\lambda=B_\lambda.
    \end{align}
    The unique solution is given by 
    \begin{align}\label{eq:SolutionCoeffRelationsForFourier}
      \tilde{C}_\lambda=\frac{(\lambda+p)A_\lambda-pB_\lambda}{\lambda(\lambda+n)},
        \quad \tilde{D}_\lambda=\frac{-(n-p)A_\lambda+(\lambda+n-p)B_\lambda}{\lambda(\lambda+n)}
    \end{align} 
    This implies, since $\beta$ was an arbitrary $p$-form, 
    \begin{multline*}
      c_\lambda^{-1}\mathcal{F}\big(\int_{\R^n} r^{\lambda+2}(x-y)
        ( \tilde{C}_\lambda \delta\dm+\tilde{D}_\lambda\dm\delta)\omega \dm y \big)(\xi)\\
        =-(\lambda+2)c_\lambda^{-1}
           \int_{\R^n}r^{\lambda-2}(x-y)(A_\lambda i_{x-y}\varepsilon_{x-y}
           +B_\lambda \varepsilon_{x-y} i_{x-y})\omega(y)\dm y.
    \end{multline*}
    Note that the factor $\lambda(\lambda+n)$ in $\tilde{C},\tilde{D}$ will be absorbed into 
    corresponding Gamma functions arising from $c_\lambda$.
    The proof is complete. 
  \end{proof}

%%%%%%%%%%%%%%%%%%%%%%%%%%%%%%%%%%%%%%%%%%%%%%%%%%%%%%%%%%%
 \subsection{Bernstein-Sato identity}
  For a given polynomial $f\in\R[x_1,\ldots,x_n]$ and complex number $s\in\C$
  one can consider $f_+^s(x):=f(x)^s$ for $f(x)> 0$ and 
  zero otherwise. If $\Re(s)>0$ this is locally integrable on $\R^n$. Bernstein 
  \cite{Bernstein} proves that $f_+^s$ admits a meromorphic continuation to $\C$ with poles 
  given by the zero locus of a certain polynomial, the {\it Bernstein polynomial}. 
  Roughly speaking one can construct a differential operator $P(s)$ with polynomial coefficients such that 
  $P(s)f^{s+1}_+(x)=b(s) f^s_+(x)$, where $b(s)$ is the Bernstein-Sato polynomial. This differential equation 
  is termed {\it Bernstein-Sato identity} and was independently discovered in \cite{SatoShintani}. 
  
  Now we present a vector-valued Bernstein-Sato identity  
  which in turn implies the mereomorphic continuation of $R^\lambda_{A_\lambda,B_\lambda}(x)$ to $\lambda\in\C$. 
  \begin{theorem}\label{BernsteinSatoGeneralRiesz}
    The distribution $R^\lambda_{A_\lambda,B_\lambda}(x)$ 
    satisfies the differential equation:
    \begin{align}\label{eq:BersteinSatoGeneralRiesz}
      D_2(\lambda;n,p)R^{\lambda+2}_{A_{\lambda+2},B_{\lambda+2}}(x)
        =-\lambda(\lambda+n) C_{\lambda+2} D_{\lambda+2}
        R^{\lambda}_{A_{\lambda},B_{\lambda}}(x)
    \end{align}
    where $D_2(\lambda;n,p)$ is a second-order differential operator given by
    \begin{align*}
      D_2(\lambda;n,p)\st C_{\lambda}D_{\lambda+2} \delta\dm
       + D_{\lambda}C_{\lambda+2}\dm\delta.
    \end{align*}
  \end{theorem}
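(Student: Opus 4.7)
The plan is to apply the Fourier transform to both sides of \eqref{eq:BersteinSatoGeneralRiesz} and reduce the assertion to a purely algebraic identity for endomorphisms of $\Lambda^p(\R^n)$, plus a single Gamma-function identity. By \eqref{eq:FourierProperties2} the symbols of $\delta\dm$ and $\dm\delta$ are the $\xi$-dependent endomorphisms $i_\xi\varepsilon_\xi$ and $\varepsilon_\xi i_\xi$, respectively, and by Theorem \ref{FourierGeneralRiesz} one has
\begin{align*}
  \mathcal{F}(R^{\lambda+2}_{A_{\lambda+2},B_{\lambda+2}}) = \kappa_\lambda\, R^{-\lambda-n-2}_{C_{\lambda+2},D_{\lambda+2}}, \qquad \mathcal{F}(R^{\lambda}_{A_{\lambda},B_{\lambda}}) = \kappa'_\lambda\, R^{-\lambda-n}_{C_{\lambda},D_{\lambda}},
\end{align*}
for explicit Gamma-quotient constants $\kappa_\lambda$ and $\kappa'_\lambda$ read off from \eqref{eq:FourierGeneralRiesz}.

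The heart of the computation is then the composition
\begin{align*}
  \bigl(C_\lambda D_{\lambda+2}\, i_\xi\varepsilon_\xi + D_\lambda C_{\lambda+2}\, \varepsilon_\xi i_\xi\bigr)\circ\bigl(C_{\lambda+2}\, i_\xi\varepsilon_\xi + D_{\lambda+2}\, \varepsilon_\xi i_\xi\bigr).
\end{align*}
Using $i_\xi^2 = 0 = \varepsilon_\xi^2$ from Lemma \ref{Identities1} the two cross terms vanish, and the anticommutator identity $i_\xi\varepsilon_\xi + \varepsilon_\xi i_\xi = r^2(\xi)$ of the same lemma yields $(i_\xi\varepsilon_\xi)^2 = r^2(\xi)\, i_\xi\varepsilon_\xi$ and $(\varepsilon_\xi i_\xi)^2 = r^2(\xi)\, \varepsilon_\xi i_\xi$. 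Hence the composition collapses to $C_{\lambda+2} D_{\lambda+2}\, r^2(\xi)\bigl(C_\lambda\, i_\xi\varepsilon_\xi + D_\lambda\, \varepsilon_\xi i_\xi\bigr)$; absorbing the scalar $r^{-\lambda-n-4}(\xi)$ present in $R^{-\lambda-n-2}_{C_{\lambda+2},D_{\lambda+2}}$ one recognises $\kappa_\lambda\, C_{\lambda+2} D_{\lambda+2}\, R^{-\lambda-n}_{C_\lambda,D_\lambda}(\xi)$. This is the structural reason that the crosswise pairing $C_\lambda \leftrightarrow D_{\lambda+2}$ and $D_\lambda \leftrightarrow C_{\lambda+2}$ is built into the definition of $D_2(\lambda;n,p)$: it is engineered precisely to cancel the $C_{\lambda+2}, D_{\lambda+2}$ appearing inside the Fourier-transformed kernel.

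What remains is the purely scalar identity $\kappa_\lambda = -\lambda(\lambda+n)\, \kappa'_\lambda$. I would verify it by two applications of the functional equation $\Gamma(z+1) = z\Gamma(z)$, namely $\Gamma(\tfrac{\lambda+n+2}{2}) = \tfrac{\lambda+n}{2}\Gamma(\tfrac{\lambda+n}{2})$ and $\Gamma(-\tfrac{\lambda}{2}) = -\tfrac{2}{\lambda}\Gamma(-\tfrac{\lambda-2}{2})$; the two emerging factors combine to produce exactly the $-\lambda(\lambda+n)$ required. The identity is first established for $\lambda$ in the half-plane $\Re(\lambda) > -n$ where the defining integrals converge, and then extended to all $\lambda\in\C$ by the meromorphic continuation of $R^\lambda_{A_\lambda,B_\lambda}$ supplied by Theorem \ref{FourierGeneralRiesz}. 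The main obstacle is bookkeeping: tracking which of the $C_\bullet, D_\bullet$ factors survive after the cross terms are annihilated, and verifying that the shift of homogeneity degree from $-\lambda-n-2$ to $-\lambda-n$ produced by the extra $r^2(\xi)$ matches exactly what the right-hand side demands.
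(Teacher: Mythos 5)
Your proposal is correct and follows essentially the same route as the paper: apply the Fourier transform to both sides, use Theorem \ref{FourierGeneralRiesz} to reduce the claim to the algebraic composition of symbols (where nilpotency kills the cross terms and $(i_\xi\varepsilon_\xi)^2=r^2(\xi)\,i_\xi\varepsilon_\xi$ collapses the rest), and match the Gamma-quotient constants via the functional equation. Your write-up is in fact more explicit than the paper's about why the composition collapses and where the factor $-\lambda(\lambda+n)$ comes from.
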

  
  \begin{proof}
    We apply the Fourier transform to Equation \eqref{eq:BersteinSatoGeneralRiesz} and use  
    Theorem \ref{FourierGeneralRiesz}.
    Hence, it remains to verify that   
    \begin{multline*}
       -2^{\lambda+n+1}\pi^{\frac n2}\frac{\Gamma(\frac{\lambda+n+2}{2})}{\Gamma(-\frac{\lambda}{2})}
         r^{-\lambda-n-4}(\xi)(C_{\lambda}D_{\lambda+2} i_\xi\varepsilon_\xi 
         +D_{\lambda}C_{\lambda+2} \varepsilon_\xi i_\xi)
         (C_{\lambda+2} i_\xi\varepsilon_\xi +D_{\lambda+2} \varepsilon_\xi i_\xi)\\
       =-2^{\lambda+n+1}\pi^{\frac n2}\frac{\Gamma(\frac{\lambda+2+n}{2})}{\Gamma(-\frac{\lambda}{2})}
              r^{-\lambda-n-4}(\xi)(C_\lambda C_{\lambda+2}D_{\lambda+2} (i_\xi\varepsilon_\xi)^2 
         +C_{\lambda+2} D_\lambda D_{\lambda+2}(\varepsilon_\xi i_\xi)^2)
    \end{multline*}
    and 
    \begin{multline*}
      \lambda(\lambda+n)C_{\lambda+2}D_{\lambda+2}2^{\lambda+n-1}\pi^{\frac n2}
         \frac{\Gamma(\frac{\lambda+n}{2})}{\Gamma(-\frac{\lambda-2}{2})}
         r^{-\lambda-n-2}(\xi)(C_{\lambda} i_\xi\varepsilon_\xi +D_{\lambda} \varepsilon_\xi i_\xi)\\
      =-2^{\lambda+n+1}\pi^{\frac n2}\frac{\Gamma(\frac{\lambda+n+2}{2})}{\Gamma(-\frac{\lambda}{2})}
              r^{-\lambda-n-4}(\xi)(C_\lambda C_{\lambda+2}D_{\lambda+2} (i_\xi\varepsilon_\xi)^2 
        +C_{\lambda+2}D_\lambda D_{\lambda+2}(\varepsilon_\xi i_\xi)^2)
    \end{multline*}
    do agree. 
  \end{proof}
  
  An iteratively application of Equation \eqref{eq:BersteinSatoGeneralRiesz} gives 
  \begin{align}\label{eq:IterationOfBernsteinSatoGeneralRiesz}
    R^{\lambda}_{A_\lambda,B_\lambda}(x)
       =\frac{(-1)^k}{4^k(\frac{\lambda}{2})_k(\frac{\lambda+n}{2})_k C_{\lambda+2k}D_{\lambda+2k}}
       D_{2k}(\lambda;n,p)R^{\lambda+2k}_{A_{\lambda+2k},B_{\lambda+2k}}(x),
  \end{align}
  where $D_{2k}(\lambda;n,p)$ is a differential operator of order $2k$, for $k\in \N$, given by 
  \begin{align}\label{eq:2kthOrderDifferential}
    D_{2k}(\lambda;n,p)\st C_{\lambda}D_{\lambda+2k}(\delta\dm)^k
     + C_{\lambda+2k}D_{\lambda}(\dm\delta)^k.
  \end{align}
  By convention we set $D_0(\lambda;n,p)\st C_\lambda D_\lambda\id$. 

  The tempered distribution $R^\lambda_{A_\lambda,B_\lambda}(x)$ originally defined for 
  $\Re(\lambda)>-n$ can now, by \eqref{eq:IterationOfBernsteinSatoGeneralRiesz}, 
  be meromorphically extended to $\lambda\in\C$ with simple poles at $\lambda=-n-2\N_0$.
  %???possible poles from $C_\lambda$ or $D_\lambda$??? 

%%%%%%%%%%%%%%%%%%%%%%%%%%%%%%%%%%%%%%%%%%%%%%%%%%%%%%%%%%%
 \subsection{Residues}
  In the last subsection we have identified the location of the poles of 
  $R^\lambda_{A_\lambda,B_\lambda}$ and their type. In general their residues, when acting as convolution operators, 
  will not be differential operators. However, under some assumtions they reduce to differential operators given 
  as a weighted sum of powers of $\delta\dm$ and $\dm\delta$.   
  \begin{theorem}\label{ResiduesGeneralRiesz}
    The residue of $\mathcal{F}(R^\lambda_{A_\lambda,B_\lambda})(\xi)$ at $\lambda=-n-2k$, for $k\in\N_0$ is 
    given by 
    \begin{multline}\label{eq:ResiduesGeneralRiesz}
      \Res_{\lambda=-n-2k}(\mathcal{F}(R^\lambda_{A_\lambda,B_\lambda})(\xi))\\
        =\frac{(-1)^{k+1}\pi^{\frac n2}}{4^{k} k! \Gamma(\frac{n+2k+2}{2})}
           \Big[\frac{C_{-n-2k}}{C_{-n}}(i_\xi\varepsilon_\xi)^k
           + \frac{D_{-n-2k}}{D_{-n}}(\varepsilon_\xi i_\xi)^k\Big]R^{0}_{C_{-n}, D_{-n}}(\xi).
    \end{multline}
    Especially, in case $R^\lambda_{A_\lambda,B_\lambda}(x)$ acts on $0$-forms we have 
    \begin{align}\label{eq:ResiduesGeneralRieszOnZeroForms}
     \Res_{\lambda=-n-2k}\big(\mathcal{F}(R^\lambda_{A_\lambda,B_\lambda})(\xi)\big)
        =\frac{(-1)^{k+1}\pi^{\frac n2}}{4^{k} k! \Gamma(\frac{n+2k+2}{2})}C_{-n-2k}r^{2k}(\xi).
    \end{align}
  \end{theorem}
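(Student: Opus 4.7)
The plan is to start from the explicit formula for $\mathcal{F}(R^\lambda_{A_\lambda,B_\lambda})(\xi)$ given by Theorem \ref{FourierGeneralRiesz}, identify the source of the poles, then recast the resulting expression in the form claimed. Since $C_\lambda,D_\lambda$ are holomorphic and $R^{-\lambda-n}_{C_\lambda,D_\lambda}(\xi)=r^{-\lambda-n-2}(\xi)(C_\lambda i_\xi\varepsilon_\xi+D_\lambda\varepsilon_\xi i_\xi)$ depends holomorphically on $\lambda$ (the power of $r$ is holomorphic in $\lambda$ as a tempered distribution away from its own pole locus, and at $\lambda=-n-2k$ we are looking at the integer power $r^{2k-2}$), the only source of poles of the prefactor is $\Gamma(\tfrac{\lambda+n}{2})$. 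Its poles are simple and located exactly at $\lambda=-n-2k$, $k\in\N_0$.

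First I would compute the residue of $\Gamma(\tfrac{\lambda+n}{2})$ at $\lambda=-n-2k$. Using the change of variable $z=(\lambda+n)/2$ and the standard fact $\Res_{z=-k}\Gamma(z)=(-1)^k/k!$, one gets $\Res_{\lambda=-n-2k}\Gamma(\tfrac{\lambda+n}{2})=\tfrac{2(-1)^k}{k!}$. Substituting $\lambda=-n-2k$ into the remaining factors yields
\begin{align*}
  \Res_{\lambda=-n-2k}\mathcal{F}(R^\lambda_{A_\lambda,B_\lambda})(\xi)
   =\frac{(-1)^{k+1}\pi^{n/2}}{4^{k}k!\,\Gamma(\tfrac{n+2k+2}{2})}\,
    R^{2k}_{C_{-n-2k},D_{-n-2k}}(\xi),
\end{align*}
after collecting the power of $2$ and using $\Gamma(-\tfrac{\lambda-2}{2})|_{\lambda=-n-2k}=\Gamma(\tfrac{n+2k+2}{2})$.

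The second step is to rewrite $R^{2k}_{C_{-n-2k},D_{-n-2k}}(\xi)=r^{2k-2}(\xi)(C_{-n-2k}\,i_\xi\varepsilon_\xi+D_{-n-2k}\,\varepsilon_\xi i_\xi)$ in the asserted factored form. The key algebraic input is Lemma \ref{Identities1}: since $i_\xi^2=\varepsilon_\xi^2=0$, the two endomorphisms $A\st i_\xi\varepsilon_\xi$ and $B\st \varepsilon_\xi i_\xi$ satisfy $AB=BA=0$, and their sum equals $r^2(\xi)\id$. Consequently $A^2=r^2(\xi)A$ and $B^2=r^2(\xi)B$, which iterates to $A^k=r^{2(k-1)}(\xi)A$ and $B^k=r^{2(k-1)}(\xi)B$ for $k\geq 1$. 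Plugging these in and using $AB=BA=0$ to kill cross terms, a short calculation shows
\begin{align*}
  \Big[\tfrac{C_{-n-2k}}{C_{-n}}(i_\xi\varepsilon_\xi)^k+\tfrac{D_{-n-2k}}{D_{-n}}(\varepsilon_\xi i_\xi)^k\Big]
  R^0_{C_{-n},D_{-n}}(\xi)=R^{2k}_{C_{-n-2k},D_{-n-2k}}(\xi),
\end{align*}
which gives \eqref{eq:ResiduesGeneralRiesz}. The mild subtlety here is the appearance of $C_{-n}$, $D_{-n}$ in the denominators: they are legitimate scalars because the formula is stated as an equality of tempered distributions, and the factorization is only used to package $R^{2k}$ as a differential operator applied to $R^0$.

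Finally, the case of $0$-forms follows from the observation that $i_\xi$ annihilates scalars, so $\varepsilon_\xi i_\xi\equiv 0$ on $0$-forms and $i_\xi\varepsilon_\xi$ acts as multiplication by $r^2(\xi)$; therefore $(i_\xi\varepsilon_\xi)^k R^0_{C_{-n},D_{-n}}(\xi)$ reduces to $C_{-n}\,r^{2k}(\xi)\id$, and the stated specialization \eqref{eq:ResiduesGeneralRieszOnZeroForms} drops out immediately. I expect the only real care to be in the bookkeeping of powers of $r$ and Gamma factors at step one; the algebraic simplification via $A^2=r^2A$, $B^2=r^2B$ is the natural mechanism that makes the residues land in the small two-dimensional algebra spanned by $A$ and $B$.
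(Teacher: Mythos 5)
Your proof is correct, and it takes a genuinely different route from the paper's. The paper does not read the residue off the closed formula of Theorem \ref{FourierGeneralRiesz} directly; instead it first applies the iterated Bernstein--Sato identity \eqref{eq:IterationOfBernsteinSatoGeneralRiesz} to shift the parameter by $2k$, writing $R^\lambda_{A_\lambda,B_\lambda}$ as $D_{2k}(\lambda;n,p)$ applied to $R^{\lambda+2k}_{A_{\lambda+2k},B_{\lambda+2k}}$ (which is regular at $\lambda=-n-2k$) divided by $4^k(\tfrac\lambda2)_k(\tfrac{\lambda+n}{2})_k C_{\lambda+2k}D_{\lambda+2k}$, then Fourier-transforms and extracts the residue from $(\lambda+n+2k)\Gamma(\tfrac{\lambda+n+2k}{2})\to 2$ together with $(\tfrac{\lambda+n}{2})_k\to(-1)^k k!$; in that approach the factored expression $\big[\tfrac{C_{-n-2k}}{C_{-n}}(i_\xi\varepsilon_\xi)^k+\tfrac{D_{-n-2k}}{D_{-n}}(\varepsilon_\xi i_\xi)^k\big]R^0_{C_{-n},D_{-n}}$ appears automatically as the symbol of $D_{2k}$. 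You instead locate the pole entirely in $\Gamma(\tfrac{\lambda+n}{2})$ (residue $2(-1)^k/k!$ after the change of variable, which you compute correctly), evaluate the remaining holomorphic factors at $\lambda=-n-2k$ to get $\tfrac{(-1)^{k+1}\pi^{n/2}}{4^k k!\,\Gamma(\frac{n+2k+2}{2})}R^{2k}_{C_{-n-2k},D_{-n-2k}}(\xi)$, and then recover the stated factored form from the algebra $AB=BA=0$, $A+B=r^2$, hence $A^{k}=r^{2(k-1)}A$ and $B^{k}=r^{2(k-1)}B$ --- a clean identity that the paper never states explicitly but which is exactly what makes the two answers agree. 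Your route is shorter and avoids the Bernstein--Sato machinery at this step; its only implicit inputs are that the identity of Theorem \ref{FourierGeneralRiesz}, established for $\Re(\lambda)>-n$, persists under meromorphic continuation (both sides being meromorphic families of tempered distributions agreeing on an open set), and that $R^{-\lambda-n}_{C_\lambda,D_\lambda}$ is holomorphic near $\lambda=-n-2k$ because $-\lambda-n=2k\ge 0$ there; both points are routine and you address the second one. Your treatment of the $0$-form specialization ($\varepsilon_\xi i_\xi=0$ and $i_\xi\varepsilon_\xi=r^2(\xi)$ on functions) also matches the paper's.
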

  \begin{proof}
    First note that the residues of $\mathcal{F}(R^\lambda_{A_\lambda,B_\lambda})(\xi)$ 
    coincide with the residues of $R^\lambda_{A_\lambda,B_\lambda}(x)$.
    The poles at $\lambda=-n-2k$ are simple. Hence its residue is given by 
    \begin{align*}
      \Res_{\lambda=-n-2k}\big(\mathcal{F}(R^\lambda_{A_\lambda,B_\lambda})(\xi)\big)
        =\lim_{\lambda\to -n-2k}(\lambda+n+2k)\mathcal{F}(R^\lambda_{A_\lambda,B_\lambda})(\xi).
    \end{align*}
   From Equation \eqref{eq:IterationOfBernsteinSatoGeneralRiesz} and Theorem 
   \ref{FourierGeneralRiesz} we get 
    \begin{align*}
      \mathcal{F}(R^\lambda_{A_\lambda,B_\lambda})(\xi)
        &=\frac{(-1)^k\mathcal{F}(D_{2k}(\lambda;n,p)R^{\lambda+2k}_{A_{\lambda+2k},B_{\lambda+2k}})(\xi)}{4^k(\frac{\lambda}{2})_k(\frac{\lambda+n}{2})_kC_{\lambda+2k}D_{\lambda+2k}}\\
        &=\frac{(-1)^{k+1}2^{\lambda+2k+n-1}\pi^{\frac n2}\Gamma(\frac{\lambda+n+2k}{2})}{4^k (\frac{\lambda}{2})_k(\frac{\lambda+n}{2})_kC_{\lambda+2k}D_{\lambda+2k}\Gamma(-\frac{\lambda+2k-2}{2})}\\
      &\times \Big[C_{\lambda}D_{\lambda+2k}(i_\xi\varepsilon_\xi)^k
        + C_{\lambda+2k}D_{\lambda}(\varepsilon_\xi i_\xi)^k\Big]R^{-\lambda-2k-n}_{C_{\lambda+2k}, D_{\lambda+2k}}(\xi).
    \end{align*}
    Combining the factor $(\lambda+n+2k)$ with the Gamma function $\Gamma(\frac{\lambda+n+2k}{2})$ 
    and taking the limit $\lambda\to -n-2k$ gives 
    \begin{multline*}
      \Res_{\lambda=-n-2k}\big(\mathcal{F}(R^\lambda_{A_\lambda,B_\lambda})(\xi)\big)\\
        =\frac{(-1)^{k+1}\pi^{\frac n2}}{4^k k! C_{-n}D_{-n}\Gamma(\frac{n+2k+2}{2})}\Big[C_{-n-2k}D_{-n}(i_\xi\varepsilon_\xi)^k
          + C_{-n}D_{-n-2k}(\varepsilon_\xi i_\xi)^k\Big]R^{0}_{C_{-n}, D_{-n}}(\xi),
    \end{multline*} 
    which implies \eqref{eq:ResiduesGeneralRiesz}. The case when $\mathcal{F}(R^\lambda_{A_\lambda,B_\lambda})(\xi)$ acts on $0$-forms is an easy consequence. 
    The proof is complete.  
  \end{proof}
  
  \begin{corollary}\label{ResiduesGeneralRieszWithAssumption}
    Let $A_\lambda$ and $B_\lambda$ holomorphic in $\lambda$ such that $C_{-n}=D_{-n}$. Then the 
    residue of $R^\lambda_{A_\lambda,B_\lambda}(x)$ at $\lambda=-n-2k$, for $k\in\N_0$, is given by 
    \begin{align}
      \Res_{\lambda=-n-2k}\big(R^\lambda_{A_\lambda,B_\lambda}(x)\big)
        =\frac{(-1)^{k+1}\pi^{\frac n2}}{4^{k} k! \Gamma(\frac{n+2k+2}{2})}\Big[C_{-n-2k}(\delta\dm)^k
          + D_{-n-2k}(\dm\delta)^k\Big]\delta_0(x).
    \end{align}
  \end{corollary}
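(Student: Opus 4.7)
The plan is to deduce this from Theorem \ref{ResiduesGeneralRiesz} by (i) using the hypothesis $C_{-n}=D_{-n}$ to collapse the distribution $R^{0}_{C_{-n},D_{-n}}(\xi)$ appearing there to a scalar multiple of the identity, and (ii) inverting the Fourier transform using the symbol/operator correspondence already established in the proof of Theorem \ref{FourierGeneralRiesz}.

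First I would note that by Lemma \ref{Identities1} one has $i_\xi\varepsilon_\xi+\varepsilon_\xi i_\xi=r^2(\xi)$. Therefore, whenever $C_{-n}=D_{-n}$, the formula \eqref{eq:GeneralRiesz} at $\lambda=0$ simplifies to
\begin{align*}
  R^{0}_{C_{-n},D_{-n}}(\xi)
    =r^{-2}(\xi)\,C_{-n}\bigl(i_\xi\varepsilon_\xi+\varepsilon_\xi i_\xi\bigr)
    =C_{-n}\,\id.
\end{align*}
Substituting this into \eqref{eq:ResiduesGeneralRiesz}, the factors $C_{-n}$ in the denominators cancel against this $C_{-n}$, and since $C_{-n}=D_{-n}$ I obtain
\begin{align*}
  \Res_{\lambda=-n-2k}\bigl(\mathcal{F}(R^\lambda_{A_\lambda,B_\lambda})(\xi)\bigr)
   =\frac{(-1)^{k+1}\pi^{\frac{n}{2}}}{4^{k}k!\,\Gamma(\frac{n+2k+2}{2})}
    \bigl[C_{-n-2k}(i_\xi\varepsilon_\xi)^k+D_{-n-2k}(\varepsilon_\xi i_\xi)^k\bigr].
\end{align*}

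Next I would invert the Fourier transform. As recalled in the proof of Theorem \ref{FourierGeneralRiesz}, the symbol relations $i_\xi\varepsilon_\xi\,\mathcal{F}(\omega)=\mathcal{F}(\delta\dm\,\omega)$ and $\varepsilon_\xi i_\xi\,\mathcal{F}(\omega)=\mathcal{F}(\dm\delta\,\omega)$ hold, and iterating them gives $(i_\xi\varepsilon_\xi)^k=\mathcal{F}\circ(\delta\dm)^k\circ\mathcal{F}^{-1}$ and $(\varepsilon_\xi i_\xi)^k=\mathcal{F}\circ(\dm\delta)^k\circ\mathcal{F}^{-1}$. Combined with $\mathcal{F}(\delta_0)=1$, the right-hand side is the Fourier transform of
\begin{align*}
  \frac{(-1)^{k+1}\pi^{\frac{n}{2}}}{4^{k}k!\,\Gamma(\frac{n+2k+2}{2})}
  \bigl[C_{-n-2k}(\delta\dm)^k+D_{-n-2k}(\dm\delta)^k\bigr]\delta_0(x).
\end{align*}
Since residue and Fourier transform commute at simple poles (this was already invoked at the start of the proof of Theorem \ref{ResiduesGeneralRiesz}), applying $\mathcal{F}^{-1}$ to both sides yields the claimed formula.

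There is no real obstacle here: the only substantive step is the observation that the hypothesis $C_{-n}=D_{-n}$ is exactly what is needed to turn the non-local factor $R^{0}_{C_{-n},D_{-n}}$ into a scalar, so that the residue, which a priori is only an operator of convolution type, becomes a differential operator supported at the origin. Everything else is bookkeeping with the Fourier correspondence between the algebraic symbols $i_\xi\varepsilon_\xi,\varepsilon_\xi i_\xi$ and the differential operators $\delta\dm,\dm\delta$.
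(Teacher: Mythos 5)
Your proposal is correct and follows essentially the same route as the paper: the hypothesis $C_{-n}=D_{-n}$ collapses $R^{0}_{C_{-n},D_{-n}}(\xi)$ to $C_{-n}\id$ via $i_\xi\varepsilon_\xi+\varepsilon_\xi i_\xi=r^2(\xi)$, the constants cancel in \eqref{eq:ResiduesGeneralRiesz}, and the symbol correspondence $i_\xi\varepsilon_\xi\leftrightarrow\delta\dm$, $\varepsilon_\xi i_\xi\leftrightarrow\dm\delta$ together with $\mathcal{F}(\delta_0)=1$ yields the stated formula. The paper's own proof is just a terser version of exactly this argument.
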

  \begin{proof}
    This is a direct consequence of Theorem \ref{ResiduesGeneralRiesz} and  
    \begin{align*}
      R^{0}_{C_{-n}, D_{-n}}(\xi)=C_{-n}r^{-2}(\xi)(i_\xi\varepsilon_\xi+\varepsilon_\xi i_\xi)=C_{-n}\id
    \end{align*}
    and $\mathcal{F}(\delta_0)(\xi)=\id(\xi)$. 
  \end{proof}

  The assumption in Corollary \ref{ResiduesGeneralRieszWithAssumption} is not empty. 
  For example, the pairs $(A_\lambda,B_\lambda)\st (1,1), (1,-1)$ or 
  $(\alpha_\lambda,\beta_\lambda)$, where $\alpha_\lambda,\beta_\lambda$ are given by 
  \eqref{eq:BGCoefficients}, satisfy the condition $C_{-n}=D_{-n}$, cf. \eqref{eq:Coefficients1}.

%%%%%%%%%%%%%%%%%%%%%%%%%%%%%%%%%%%%%%%%%%%%%%%%%%%%%%%%%
 \subsection{Convolutions}
  Already mentioned before, the operation between function known as convolution is 
  important to define certain integral operators. It also is important in the theory of Fourier transform, since 
  the Fourier transform transforms convolutions into products. 
  
  We proceed with an identity concerning convolutions of $R^\lambda_{A_\lambda,B_\lambda}(x)$.
  \begin{theorem}\label{ConvolutionIndentityGeneralRiesz}
    Let $A_\lambda,B_\lambda$ such that $C_{2(\lambda-n)}C_{-2\lambda}=D_{2(\lambda-n)}D_{-2\lambda}$. 
    Then the distribution $R^\lambda_{A_\lambda,B_\lambda}(x)$ satisfies
    \begin{align}\label{eq:ConvolutionIndentityGeneralRiesz}
      R^{2(\lambda-n)}_{A_{2(\lambda-n)},B_{2(\lambda-n)}}*R^{-2\lambda}_{A_{-2\lambda},B_{-2\lambda}}(x)
      =\pi^n \frac{\Gamma(\frac{2\lambda-n}{2})\Gamma(\frac{-2\lambda+n}{2})}{2^2\Gamma(-\lambda+n+1)\Gamma(\lambda+1)} C_{2(\lambda-n)}C_{-2\lambda}\delta_0(x).
    \end{align}
  \end{theorem}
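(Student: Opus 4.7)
The plan is to reduce the convolution identity to an algebraic identity on the Fourier side via Theorem \ref{FourierGeneralRiesz}, which converts convolution of operator-valued distributions to composition of their Fourier transforms. Setting $\mu := 2(\lambda-n)$ and $\nu := -2\lambda$, notice the key numerology $\mu + \nu = -2n$, so that the exponents of $R^{-\mu-n}_{C_\mu,D_\mu}(\xi)$ and $R^{-\nu-n}_{C_\nu,D_\nu}(\xi)$ satisfy $(-\mu-n) + (-\nu-n) = 0$. Applying Theorem \ref{FourierGeneralRiesz} to each factor yields
\begin{align*}
  \mathcal{F}(R^{\mu}_{A_\mu,B_\mu} * R^{\nu}_{A_\nu,B_\nu})(\xi)
  = 2^{-2}\pi^n
    \frac{\Gamma(\tfrac{\mu+n}{2})\Gamma(\tfrac{\nu+n}{2})}{\Gamma(-\tfrac{\mu-2}{2})\Gamma(-\tfrac{\nu-2}{2})}\,
    R^{-\mu-n}_{C_\mu,D_\mu}(\xi)\,R^{-\nu-n}_{C_\nu,D_\nu}(\xi),
\end{align*}
and the prefactor matches (after substituting back $\mu,\nu$ in terms of $\lambda$) the ratio of Gamma functions appearing in \eqref{eq:ConvolutionIndentityGeneralRiesz}.

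The main task is then to show that the operator product on the right collapses to $C_\mu C_\nu\,\id$. This is the algebraic heart of the proof, and is the step to verify with care. Using Lemma \ref{Identities1} together with the nilpotency $i_\xi^2 = \varepsilon_\xi^2 = 0$, the building blocks $P := i_\xi\varepsilon_\xi$ and $Q := \varepsilon_\xi i_\xi$ satisfy $PQ = QP = 0$ and $P+Q = r^2(\xi)\id$, hence $P^2 = r^2(\xi)P$ and $Q^2 = r^2(\xi)Q$. Expanding
\begin{align*}
  R^{-\mu-n}_{C_\mu,D_\mu}(\xi)\,R^{-\nu-n}_{C_\nu,D_\nu}(\xi)
  &= r^{-4}(\xi)(C_\mu P + D_\mu Q)(C_\nu P + D_\nu Q)\\
  &= r^{-4}(\xi)\bigl(C_\mu C_\nu P^2 + D_\mu D_\nu Q^2\bigr)
  = r^{-2}(\xi)\bigl(C_\mu C_\nu P + D_\mu D_\nu Q\bigr).
\end{align*}
This is where the hypothesis $C_{2(\lambda-n)}C_{-2\lambda} = D_{2(\lambda-n)}D_{-2\lambda}$ is used: it forces the coefficients of $P$ and $Q$ to agree, so the bracket equals $C_\mu C_\nu(P+Q) = C_\mu C_\nu r^2(\xi)\id$, and the whole expression reduces to $C_\mu C_\nu\,\id$.

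Finally, inverting the Fourier transform uses $\mathcal{F}(\delta_0) = \id$, so the right-hand side corresponds to $\delta_0(x)$ times the scalar prefactor. Substituting $\mu = 2(\lambda-n)$ and $\nu = -2\lambda$ into the Gamma quotient gives $\Gamma(\tfrac{\mu+n}{2}) = \Gamma(\tfrac{2\lambda-n}{2})$, $\Gamma(\tfrac{\nu+n}{2}) = \Gamma(\tfrac{-2\lambda+n}{2})$, $\Gamma(-\tfrac{\mu-2}{2}) = \Gamma(-\lambda+n+1)$, and $\Gamma(-\tfrac{\nu-2}{2}) = \Gamma(\lambda+1)$, yielding exactly the constant on the right-hand side of \eqref{eq:ConvolutionIndentityGeneralRiesz}. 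The only delicate point is keeping track of the composition order on the operator side — but since the product is shown to be a scalar multiple of the identity, this ambiguity disappears.
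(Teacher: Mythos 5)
Your proposal is correct and follows essentially the same route as the paper: apply Theorem \ref{FourierGeneralRiesz} to each factor, match the Gamma-function prefactor, and collapse the operator product using the hypothesis together with $i_\xi\varepsilon_\xi+\varepsilon_\xi i_\xi=r^2(\xi)$. Your explicit verification that $PQ=QP=0$ and $P^2=r^2(\xi)P$, $Q^2=r^2(\xi)Q$ merely spells out a step the paper leaves implicit.
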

  
  \begin{proof}
    By application of the Fourier transform, see Theorem \ref{FourierGeneralRiesz}, to \eqref{eq:ConvolutionIndentityGeneralRiesz} we obtain
    \begin{multline*}
      \mathcal{F}\big(  R^{2(\lambda-n)}_{A_{2(\lambda-n)},B_{2(\lambda-n)}}*
      R^{-2\lambda}_{A_{-2\lambda},B_{-2\lambda}}\big)(\xi)\\
        =2^{-2}\pi^n\frac{\Gamma(\frac{2\lambda-n}{2})\Gamma(\frac{-2\lambda+n}{2})}{\Gamma(-\lambda+n+1)\Gamma(\lambda+1)}
        r^{-4}(\xi)(C_{2(\lambda-n)}i_\xi\varepsilon_\xi
        +D_{2(\lambda-n)}\varepsilon_\xi i_\xi)(C_{-2\lambda}i_\xi\varepsilon_\xi
        +D_{-2\lambda}\varepsilon_\xi i_\xi).
    \end{multline*}
    Now the assumption $C_{2(\lambda-n)}C_{-2\lambda}=D_{2(\lambda-n)}D_{-2\lambda}$ 
    and $i_\xi\varepsilon_\xi+\varepsilon_\xi i_\xi=r^2(\xi)$, cf. Lemma \ref{Identities1}, 
    complete the proof.
  \end{proof}
  The assumption 
  $C_{2(\lambda-n)}C_{-2\lambda}=D_{2(\lambda-n)}D_{-2\lambda}$ is not empty. 
  For example take the values $(A_\lambda,B_\lambda)\st (1,1),(1,-1)$ 
  or $(\alpha_\lambda,\beta_\lambda)$, cf. \eqref{eq:BGCoefficients} . In each case the assumption 
  of Theorem \ref{ConvolutionIndentityGeneralRiesz} is satisfied. 

  A semi-group structure of $R^\lambda_{A_\lambda,B_\lambda}(x)$ in full generality is not possible, see Remark 
  \ref{Semi-GroupStructure} for an example and a general statement.

%%%%%%%%%%%%%%%%%%%%%%%%%%%%%%%%%%%%%%%%%%%%%%%%%%%%%%%%%%%%%%%%%
 \subsection{Integral operators}
  Integral transforms are widely used in mathematics. Most constructions of 
  integral operators based on integrating or taking 
  convolutions by kernel functions. 
  The Fourier transform is such an example, it arises by integration against $e^{i\langle x,\xi\rangle}$. 
  Also pseudo-differential operators arise as integral operators. 
  Special cases are differential operators, namely those having 
  polynomial kernels. Furthermore, some pseudo-differential operators have a certain 
  symmetry, for example the Knapp-Stein integral operators \cite{KnappStein} intertwine the corresponding 
  principal series. 

  Recall that $R^\lambda_{A_\lambda,B_\lambda}(x)$ is meromorphic in 
  $\lambda\in\C$ with only simple poles at $\lambda=-n-2\N_0$. 
  The integral operator 
  \begin{align}\label{eq:ConvolutionOperatorGeneralRiesz}
      \mathbb{T}_{\nu,(A_{\nu},B_{\nu})}:\Omega^p(\R^n)\to \Omega^p(\R^n)
  \end{align}
  is defined as convolution operator with $R^\lambda_{A_\lambda,B_\lambda}(x)$, i.e. 
  \begin{align*}
      (\mathbb{T}_{\nu,(A_{\nu},B_{\nu})}\omega)(x)
        \st \int_{\R^n} R^{2(\nu-n)}_{A_{2(\nu-n)},B_{2(\nu-n)}}(x-y)\omega(y)\dm y.
  \end{align*}
  These family $\mathbb{T}_{\nu,(A_{\nu},B_{\nu})}$ of pseudo-differential operators 
  is meromorphic in $\nu\in\C$ with only simple poles at $\nu=\frac n2-\N_0$. Now we describe their residues for 
  a class of integral operators.
  \begin{theorem}\label{ResidueIntegralOperatorGeneralRieszAssumption}
    Let $A_\lambda,B_\lambda$ holomorphic in $\lambda\in\C$ such that $C_{-n}=D_{-n}$. Then the residue 
    of $\mathbb{T}_{\nu,(A_{\nu},B_{\nu})}$ at $\nu=\frac n2-k$, for $k\in\N_0$, is 
    \begin{align}\label{eq:ResidueIntegralOperatorGeneralRieszAssumption}
      \Res_{\nu=\frac n2-k}(\mathbb{T}_{\nu,(A_{\nu},B_{\nu})}\omega)
        =\frac{(-1)^{k+1}\pi^{\frac n2}}{4^k k! \Gamma(\frac n2+k+1)}(C_{-n-2k}(\delta\dm)^k+D_{-n-2k}(\dm\delta)^k)\omega,
    \end{align}
    for $\omega\in\Omega^p(\R^n)$.
  \end{theorem}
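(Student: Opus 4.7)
The plan is to reduce the computation directly to Corollary \ref{ResiduesGeneralRieszWithAssumption} through the reparametrisation $\lambda = 2(\nu - n)$ built into the definition of $\mathbb{T}_{\nu,(A_\nu,B_\nu)}$.

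By definition $(\mathbb{T}_{\nu,(A_\nu,B_\nu)}\omega)(x) = (R^{2(\nu-n)}_{A_{2(\nu-n)},B_{2(\nu-n)}} * \omega)(x)$, so the meromorphic structure of $\mathbb{T}_\nu$ in $\nu$ is inherited from that of $R^\lambda_{A_\lambda,B_\lambda}$ in $\lambda$. Since $R^\lambda$ has simple poles exactly at $\lambda = -n - 2j$ for $j \in \N_0$, these correspond under $\lambda = 2(\nu - n)$ to simple poles at $\nu = n/2 - j$, matching the location stated in the theorem. The hypothesis $C_{-n} = D_{-n}$ places us precisely in the setting of Corollary \ref{ResiduesGeneralRieszWithAssumption}.

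Next I would substitute $\lambda = 2(\nu - n)$ into the residue formula from Corollary \ref{ResiduesGeneralRieszWithAssumption}, which gives the residue of the convolution kernel at $\nu = n/2 - k$ as a constant multiple of $\bigl[C_{-n-2k}(\delta \dm)^k + D_{-n-2k}(\dm \delta)^k\bigr]\delta_0(x)$. Convolving against $\omega \in \Omega^p(\R^n)$ and using $\delta_0 * \omega = \omega$ together with the fact that $\dm$ and $\delta$ are constant-coefficient translation-invariant operators on forms, so that $(P\delta_0) * \omega = P\omega$ for $P = (\delta \dm)^k$ or $(\dm \delta)^k$, produces the stated formula.

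No step here poses a serious obstacle: the proof is essentially a transport of Corollary \ref{ResiduesGeneralRieszWithAssumption} across the affine reparametrisation and an application of the standard identity $(P\delta_0)*\omega = P\omega$. The only verification requiring some care is the bookkeeping of the numerical constant in front, i.e.\ keeping track of the leading coefficient of the Laurent expansion of $R^{2(\nu-n)}_{A_{2(\nu-n)},B_{2(\nu-n)}}$ at the pole $\nu = n/2 - k$; once this matches the stated $\frac{(-1)^{k+1}\pi^{n/2}}{4^k k!\,\Gamma(n/2+k+1)}$, the proof is complete.
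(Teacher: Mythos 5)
Your proposal is correct and is essentially identical to the paper's argument: the paper's entire proof is the single line that the theorem is a direct consequence of Corollary \ref{ResiduesGeneralRieszWithAssumption}, and you have simply made explicit the reparametrisation $\lambda=2(\nu-n)$ and the identity $(P\delta_0)*\omega=P\omega$ for the constant-coefficient operators $P=(\delta\dm)^k,(\dm\delta)^k$. One remark on the constant you rightly flag as the delicate point: with the standard definition of residue the chain rule gives $\Res_{\nu=\frac n2-k}=\tfrac12\Res_{\lambda=-n-2k}$ under $\lambda=2(\nu-n)$, so a literal transport of Corollary \ref{ResiduesGeneralRieszWithAssumption} yields an extra factor $\tfrac12$ compared with \eqref{eq:ResidueIntegralOperatorGeneralRieszAssumption}; this factor is suppressed consistently in the paper (compare the displayed residue of $\mathbb{T}_{\nu,(1,0)}$ with property (4) of $r^\lambda$), so it reflects the paper's normalisation rather than a gap in your reasoning.
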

  \begin{proof}
    This is a direct consequence of Corollary \ref{ResiduesGeneralRieszWithAssumption}. 
  \end{proof}

%%%%%%%%%%%%%%%%%%%%%%%%%%%%%%%%%%%%%%%%%%%%%%%%%%%%%%%%
%%%%%%%%%%%%%%%%%%%%%%%%%%%%%%%%%%%%%%%%%%%%%%%%%%%%%%%%
%%%%%%%%%%%%%%%%%%%%%%%%%%%%%%%%%%%%%%%%%%%%%%%%%%%%%%%%
\section{Further results and applications}

 We focus on Riesz distributions $R^\lambda_{A_\lambda,B_\lambda}(x)$ for certain $A_\lambda,B_\lambda$, 
 and we show how to recover 
 the Knapp-Stein intertwining operators \cite{KnappStein} on functions and differential forms. Furthermore, 
 we discuss their relation to the well-known GJMS- and Branson-Gover 
 operators \cite{GJMS,BransonGover}, respectively.

%%%%%%%%%%%%%%%%%%%%%%%%%%%%%%%%%%%%%%%%%%%%%%%%%%%%%%%%
 \subsection{Knapp-Stein intertwining operator on functions}  
  We show that the distribution 
  \begin{align}
    R^\lambda_{1,0}(x)=r^{\lambda-2}(x)i_x\varepsilon_x
  \end{align}
  when acting on $0$-forms 
  reduces to the Riesz distribution \eqref{eq:RieszDistribution}. 
  The Knapp-Stein intertwining operator on functions is 
  defined as convolution operator with the Riesz distribution.

  Let us briefly recall the Riesz distribution \cite{Riesz, GelfandShilov}, which is given for $\lambda\in\C$ with $\Re(\lambda)>-n$ by 
  \begin{align*}
      r^\lambda(x)=(x_1^2+\ldots+x_n^2)^{\frac{\lambda}{2}}.
  \end{align*}
  It extends to a tempered distribution meromorphic in $\lambda\in\C$ with 
  simple poles at $\lambda= -n-2k$ for $k\in\N_0$, and obeys the following properties:
  \begin{itemize}
      \item[(1)] Fourier transform: $\mathcal{F}(r^\lambda)(\xi)=2^{\lambda+n}\pi^{n/2}
         \frac{\Gamma(\frac{\lambda+n}{2})}{\Gamma(-\frac{\lambda}{2})}r^{-\lambda-n}(\xi)$.
      \item[(2)] Bernstein-Sato identity: $ \Delta(r^{\lambda+2}(x))=(\lambda+2)(\lambda+n)r^\lambda(x)$, 
         where $\Delta=\sum_i\partial_i^2$. 
      \item[(3)] Convolutionary inverse: 
         $(r^{2(\lambda-n)}*r^{-2\lambda})(x)=\pi^n\frac{\Gamma(\frac{2\lambda-n}{2})\Gamma(\frac{-2\lambda+n}{2})}{\Gamma(\lambda)\Gamma(n-\lambda)}\delta_0(x)$, where 
         $\delta_0(x)$ is the Dirac-distribution centered at the orign. 
      \item[(4)] Residues at $\lambda=-n-2\N_0$: 
         $\Res_{\lambda=-n-2k}(r^{\lambda}(x))=\frac{2\pi^{\frac n2}}{2^{2k}k!(\frac n2)_k\Gamma(\frac{n}{2})}\Delta^k \delta_0(x)$, 
         where $k\in\N_0$.
  \end{itemize}
  
  By Lemma \ref{Identities1} we obtain 
  \begin{align*}
      R^\lambda_{1,0}(x)=r^{\lambda-2}(x)i_x\varepsilon_x=r^\lambda(x).
  \end{align*}
  Hence, $R^\lambda_{A_\lambda,B_\lambda}(x)$ is a generalization of $r^\lambda(x)$. Note that one could 
  keep the parameters $(A_\lambda,B_\lambda)$ different from $(1,0)$ 
  which leads, as long as acting on functions, 
  to a different normalization of $r^{\lambda}(x)$ by the factor $A_\lambda$, since $B_\lambda$ 
  has no contribution on functions due to $i_x f=0$ for any function $f$.  
  
  It is an easy exercise, using the parameters $C_\lambda=\lambda$ and 
  $D_\lambda=-n$ (again the latter one has no contribution on functions), cf. \eqref{eq:Coefficients1}, that 
  Theorems \ref{FourierGeneralRiesz}, \ref{BernsteinSatoGeneralRiesz}, 
  \ref{ConvolutionIndentityGeneralRiesz} and \ref{ResiduesGeneralRiesz} specialize
  for $R^\lambda_{1,0}(x)$ to (1)-(4) given above. 
  
  \begin{remark}
    Note that we have only used the knowledge of the Fourier transform to achieve Theorem \ref{ResiduesGeneralRiesz}. 
    In \cite{GelfandShilov} a different 
    computation for $\Res_{\lambda=-n-2k}(r^\lambda(x))$ is 
    presented, which doesn't based on the Fourier transform. 
  \end{remark}
  
  The integral operator \eqref{eq:ConvolutionOperatorGeneralRiesz} reduces to 
  \begin{align}\label{eq:KnappSteinForFunctions}
    (\mathbb{T}_{\nu,(1,0)} f)(x)= \int_{\R^n}\abs{x-y}^{2(\nu-n)}f(y)dy, 
  \end{align}
  which is exactly the Knapp-Stein intertwining operator.  
  Its major property is that it intertwines the corresponding principal series \cite{KnappStein}. 
  Furthermore, from Theorem \ref{ResidueIntegralOperatorGeneralRieszAssumption}
  it follows that the residue of $\mathbb{T}_{\nu,(1,0)}$ at $\nu=\frac n2-k$, $k\in\N_0$, 
  is given by the differential operator 
  \begin{align*}
   \Res_{\nu=\frac n2-k}(\mathbb{T}_{\nu,(1,0)}) 
     = \frac{2\pi^{\frac n2}}{4^k k!\Gamma(\frac n2+k)}\Delta^k\delta_0.
  \end{align*}
  Note that our convention implies $\delta\dm=-\Delta$ when action on functions. Hence, residues of 
  $\mathbb{T}_{\nu,(1,0)}$ recover the well-known conformal powers of the Laplacian on function, 
  \cite{GJMS}.

%%%%%%%%%%%%%%%%%%%%%%%%%%%%%%%%%%%%%%%%%%%%%%%%%%%%%%%%%% 
 \subsection{Knapp-Stein intertwining operator for differential forms}\label{SubsectionKSDF}

 We next investigate the distribution 
 \begin{align}\label{eq:Riesz2}
   R^\lambda_{1,-1}(x)=r^{\lambda-2}(x)(i_x\varepsilon_x-\varepsilon_x i_x).
 \end{align}
 We show that the residues of integral operators $\mathbb{T}_{\nu,(1,-1)}$ are the Branson-Gover operators of 
 order $2N\in\N$, cf. \cite{BransonGover}, which are given by 
 \begin{align}\label{eq:BGOperator}
   L_{2N}^{(p)}=\alpha_N(\delta\dm)^N+\beta_N(\dm\delta)^N:\Omega^p(\R^n)\to\Omega^p(\R^n),
 \end{align}
 with coefficients 
 \begin{align}\label{eq:BGCoefficients}
   \alpha_\lambda\st\frac n2-p+\lambda,\quad \beta_\lambda\st\frac n2-p-\lambda.
 \end{align}
 Furthermore, we extend the intertwining property of $L_{2N}^{(p)}$, see for example \cite{FJS2}, 
 to $\mathbb{T}_{\nu,(1,-1)}$. We also could get the 
 intertwining property by a direct comparison of $\mathbb{T}_{\nu,(1,-1)}$ with the Knapp-Stein 
 intertwining operator studied by \cite{SpehVenkataramana}, see Remark 
 \ref{IntegralOperatorsComparision}. Finally we present an elementary 
 proof of positive-definiteness of scalar products induced by $\mathbb{T}_{\nu,(1,-1)}$, 
 cf. \cite{SpehVenkataramana}.
 
 Now, for $(A_\lambda,B_\lambda)=(1,-1)$ the coefficients \eqref{eq:Coefficients1} are  
 \begin{align*}
   C_\lambda=\lambda+2p=-2\alpha_{-\frac{\lambda+n}{2}},\quad 
     D_\lambda=-(\lambda+2n-2p)=-2\beta_{-\frac{\lambda+n}{2}}.
 \end{align*} 
 From Theorems \ref{FourierGeneralRiesz}, \ref{BernsteinSatoGeneralRiesz}, 
 \ref{ConvolutionIndentityGeneralRiesz} and Corollar \ref{ResiduesGeneralRieszWithAssumption}
 we obtain the following corollaries.
 \begin{corollary}\label{FourierRiesz2}
  The Fourier transform of $R^\lambda_{1,-1}(x)$ is given by
  \begin{align}\label{eq:FourierRiesz2}
   \mathcal{F}(R^\lambda_{1,-1})(\xi)
     =2^{\lambda+n}\pi^{\frac n2}\frac{\Gamma(\frac{\lambda+n}{2})}{\Gamma(-\frac{\lambda-2}{2})}
       R^{-\lambda-n}_{\alpha_{-\frac{\lambda+n}{2}},\beta_{-\frac{\lambda+n}{2}}}(\xi).
   \end{align}
 \end{corollary}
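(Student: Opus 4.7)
The plan is to deduce this as a direct specialization of Theorem \ref{FourierGeneralRiesz} with $(A_\lambda, B_\lambda) = (1,-1)$, coupled with the algebraic identification already noted in the text that for this choice of parameters the coefficients $C_\lambda, D_\lambda$ are proportional to the Branson-Gover coefficients $\alpha, \beta$ after a reparametrization of $\lambda$.

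First I would substitute $A_\lambda = 1$, $B_\lambda = -1$ into the defining formulas \eqref{eq:Coefficients1} and obtain $C_\lambda = \lambda + 2p$ and $D_\lambda = -(\lambda + 2n - 2p)$. A one-line check against the definition \eqref{eq:BGCoefficients} of $\alpha_\lambda, \beta_\lambda$ then gives the identities
\begin{align*}
  C_\lambda = -2\alpha_{-\frac{\lambda+n}{2}}, \qquad D_\lambda = -2\beta_{-\frac{\lambda+n}{2}},
\end{align*}
which are already recorded in the subsection above the statement.

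Next I would plug these into the right-hand side of Theorem \ref{FourierGeneralRiesz}. Since the expression $R^{-\lambda-n}_{C_\lambda, D_\lambda}(\xi) = r^{-\lambda-n-2}(\xi)(C_\lambda i_\xi \varepsilon_\xi + D_\lambda \varepsilon_\xi i_\xi)$ is linear in its lower indices, factoring out the common $-2$ yields
\begin{align*}
  R^{-\lambda-n}_{C_\lambda, D_\lambda}(\xi) = -2\, R^{-\lambda-n}_{\alpha_{-\frac{\lambda+n}{2}}, \beta_{-\frac{\lambda+n}{2}}}(\xi).
\end{align*}
Combining the prefactor $-2^{\lambda+n-1}\pi^{n/2}\Gamma(\frac{\lambda+n}{2})/\Gamma(-\frac{\lambda-2}{2})$ from Theorem \ref{FourierGeneralRiesz} with this $-2$ gives $2^{\lambda+n}\pi^{n/2}\Gamma(\frac{\lambda+n}{2})/\Gamma(-\frac{\lambda-2}{2})$, which is precisely the constant appearing in \eqref{eq:FourierRiesz2}.

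There is no genuine obstacle here; the only point requiring a moment of care is the sign bookkeeping when absorbing the factor of $-2$ into the prefactor of Theorem \ref{FourierGeneralRiesz}, and making sure the shift $\lambda \mapsto -\frac{\lambda+n}{2}$ in the arguments of $\alpha, \beta$ is the right one so that the relation $C_\lambda = -2\alpha_{-(\lambda+n)/2}$ truly holds as an identity in $\lambda$ rather than just as a numerical coincidence.
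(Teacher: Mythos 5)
Your proposal is correct and is exactly the paper's route: the text preceding the corollary records the identities $C_\lambda=\lambda+2p=-2\alpha_{-\frac{\lambda+n}{2}}$ and $D_\lambda=-(\lambda+2n-2p)=-2\beta_{-\frac{\lambda+n}{2}}$ and then simply cites Theorem \ref{FourierGeneralRiesz}, with the factor $-2$ absorbed into the prefactor $-2^{\lambda+n-1}$ to give $2^{\lambda+n}$ just as you describe. Your sign and linearity bookkeeping checks out, so there is nothing to add.
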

 
 \begin{corollary}\label{BernsteinSatoRiesz2}
  The distribution $R^\lambda_{1,-1}(x)$ satisfies the following Bernstein-Sato identiy:
  \begin{multline*}
   \Big[(\lambda+2n-2p)(\lambda+2p-2)\delta\dm 
     + (\lambda+2p)(\lambda+2n-2p-2)\dm\delta \Big]
     R^{\lambda}_{1,-1}(x)\\
   =-(\lambda-2)(\lambda+n-2)(\lambda+2p)(\lambda+2n-2p)R^{\lambda-2}_{1,-1}(x).
  \end{multline*}
 \end{corollary}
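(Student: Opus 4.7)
The plan is to specialize the general Bernstein-Sato identity of Theorem \ref{BernsteinSatoGeneralRiesz} to the pair $(A_\lambda,B_\lambda)=(1,-1)$ and shift the parameter. There is no new analysis to perform; the entire content is an algebraic simplification.

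First I would record the specialization of the auxiliary functions in \eqref{eq:Coefficients1}. Plugging $A_\lambda=1,B_\lambda=-1$ (both constant in $\lambda$) gives
\begin{align*}
C_\lambda=(\lambda+p)\cdot 1-p\cdot(-1)=\lambda+2p,\qquad
D_\lambda=-(n-p)+(\lambda+n-p)(-1)=-(\lambda+2n-2p).
\end{align*}
In particular $C_{\lambda-2}=\lambda+2p-2$ and $D_{\lambda-2}=-(\lambda+2n-2p-2)$.

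Next I would apply Theorem \ref{BernsteinSatoGeneralRiesz} after the substitution $\lambda\mapsto\lambda-2$, which reads
\begin{align*}
\bigl[C_{\lambda-2}D_\lambda\,\delta\dm+D_{\lambda-2}C_\lambda\,\dm\delta\bigr]R^{\lambda}_{1,-1}(x)
=-(\lambda-2)(\lambda+n-2)\,C_\lambda D_\lambda\,R^{\lambda-2}_{1,-1}(x).
\end{align*}
Now I would substitute the four explicit values. The differential operator on the left becomes
\begin{align*}
(\lambda+2p-2)\bigl(-(\lambda+2n-2p)\bigr)\delta\dm+\bigl(-(\lambda+2n-2p-2)\bigr)(\lambda+2p)\dm\delta,
\end{align*}
so pulling out the common minus sign yields exactly the negative of the bracket in the statement. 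On the right, $C_\lambda D_\lambda=-(\lambda+2p)(\lambda+2n-2p)$, and the overall sign $(-1)\cdot(-1)=+1$ from the two minuses cancels against the sign we extracted on the left, producing the claimed identity.

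The only step that requires attention is the sign bookkeeping, since $D_\lambda$ carries an overall minus. There is no analytic obstacle; once the values of $C_\lambda$ and $D_\lambda$ are correctly inserted, the corollary follows immediately. I would therefore expect the proof to consist essentially of the sentence ``apply Theorem \ref{BernsteinSatoGeneralRiesz} with $(A_\lambda,B_\lambda)=(1,-1)$ and shift $\lambda\mapsto\lambda-2$,'' supplemented by the display of $C_\lambda,D_\lambda$ above.
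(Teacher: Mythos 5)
Your proposal is correct and is exactly the paper's (implicit) argument: the paper simply records $C_\lambda=\lambda+2p$, $D_\lambda=-(\lambda+2n-2p)$ for $(A_\lambda,B_\lambda)=(1,-1)$ and states that the corollary follows from Theorem \ref{BernsteinSatoGeneralRiesz}, which is precisely your specialization with the shift $\lambda\mapsto\lambda-2$. Your sign bookkeeping checks out, so nothing is missing.
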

 Especially, the last corollary implies  
 \begin{multline}\label{eq:IteratedBernsteinSatoRiesz2}
   R^\lambda_{1,-1}(x)=\frac{(-1)^k}{4^k(\frac{\lambda}{2})_k(\frac{\lambda+n}{2})_k} 
     \Big[\frac{(\lambda+2p)}{(\lambda+2p+2k)}(\delta\dm)^k
     +\frac{(\lambda+2n-2p)}{(\lambda+2n-2p+2k)} (\dm\delta)^k  \Big]R^{\lambda+2k}_{1,-1}(x).
 \end{multline}

 \begin{corollary}\label{ResiduesRiesz2}
   The residue of $R^\lambda_{1,-1}(x)$ at $\lambda=-n-2k$ for $k\in\N_0$ is 
   \begin{align}\label{eq:ResiduesRiesz2}
     \Res_{\lambda=-n-2k}(R^\lambda_{1,-1}(x) )
      =\frac{(-1)^k 2\pi^{\frac n2}}{4^{k} k! \Gamma(\frac n2+k+1)}
       [\alpha_k (\delta\dm)^k+\beta_k (\dm\delta)^k]\delta_0(x),
   \end{align}
   which is up to a constant the Branson-Gover operator $L_{2k}^{(p)}$ acting on the Dirac-distribution. 
 \end{corollary}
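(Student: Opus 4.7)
The plan is to derive this as a direct specialization of Corollary \ref{ResiduesGeneralRieszWithAssumption} to the parameters $(A_\lambda,B_\lambda)=(1,-1)$. First I would verify that the hypothesis of that corollary holds: using the formulas \eqref{eq:Coefficients1}, one computes $C_\lambda=(\lambda+p)\cdot 1-p\cdot(-1)=\lambda+2p$ and $D_\lambda=-(n-p)+(\lambda+n-p)(-1)=-(\lambda+2(n-p))$. In particular $C_{-n}=2p-n=D_{-n}$, so the assumption $C_{-n}=D_{-n}$ of Corollary \ref{ResiduesGeneralRieszWithAssumption} is satisfied.

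Next I would rewrite $C_{-n-2k}$ and $D_{-n-2k}$ in terms of the Branson-Gover coefficients \eqref{eq:BGCoefficients}. Using the identities $C_\lambda=-2\alpha_{-(\lambda+n)/2}$ and $D_\lambda=-2\beta_{-(\lambda+n)/2}$ noted in the text (easy to check from the definition of $\alpha_\lambda,\beta_\lambda$), one has
\begin{align*}
  C_{-n-2k} = -2\alpha_k, \qquad D_{-n-2k} = -2\beta_k.
\end{align*}

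Plugging these into the formula of Corollary \ref{ResiduesGeneralRieszWithAssumption}, the combined factor $(-1)^{k+1}\cdot(-2)=(-1)^k\cdot 2$ produces the sign and constant in \eqref{eq:ResiduesRiesz2}. Finally I would observe that $\Gamma(\frac{n+2k+2}{2})=\Gamma(\frac{n}{2}+k+1)$, so the Gamma factor also matches. The bracket $[\alpha_k(\delta\dm)^k+\beta_k(\dm\delta)^k]$ is precisely the Branson-Gover operator $L^{(p)}_{2k}$ of \eqref{eq:BGOperator}, completing the identification.

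There is no real obstacle here since all the work has been done in Corollary \ref{ResiduesGeneralRieszWithAssumption}; the only point that requires minor care is getting the sign conventions of $C_\lambda,D_\lambda$ versus $\alpha_\lambda,\beta_\lambda$ right and observing that the factor of $-2$ arising from each of $C_{-n-2k}$ and $D_{-n-2k}$ flips the overall sign $(-1)^{k+1}$ into $(-1)^k\cdot 2$ as stated.
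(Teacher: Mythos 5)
Your proposal is correct and follows exactly the paper's route: the paper likewise obtains Corollary \ref{ResiduesRiesz2} by specializing Corollary \ref{ResiduesGeneralRieszWithAssumption} to $(A_\lambda,B_\lambda)=(1,-1)$, having first recorded $C_\lambda=\lambda+2p=-2\alpha_{-\frac{\lambda+n}{2}}$ and $D_\lambda=-(\lambda+2n-2p)=-2\beta_{-\frac{\lambda+n}{2}}$ (hence $C_{-n}=D_{-n}=2p-n$). Your bookkeeping of the factor $-2$ turning $(-1)^{k+1}$ into $(-1)^k\cdot 2$ and of $\Gamma(\tfrac{n+2k+2}{2})=\Gamma(\tfrac n2+k+1)$ is exactly right.
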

 
 \begin{corollary}\label{ConvolutionIndentityRiesz2}
   The distribution $R^\lambda_{1,-1}(x)$ satisfies 
   \begin{align}\label{eq:ConvolutionIndentityRiesz2}
      R^{2(\lambda-n)}_{1,-1}*R^{-2\lambda}_{1,-1}(x)
      =\pi^n \frac{\Gamma(\frac{2\lambda-n}{2})\Gamma(\frac{-2\lambda+n}{2})}{\Gamma(-\lambda+n+1)\Gamma(\lambda+1)}\alpha_{\frac{-2\lambda+n}{2}}\alpha_{\frac{2\lambda-n}{2}}\delta_0(x)
   \end{align}
 \end{corollary}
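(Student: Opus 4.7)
The plan is to reduce the statement directly to Theorem \ref{ConvolutionIndentityGeneralRiesz} by specialising the general coefficients to $(A_\lambda, B_\lambda) = (1,-1)$ and verifying that the required hypothesis holds. The whole argument is essentially a bookkeeping check, with the only genuine content being the rewriting of the resulting constant in terms of the $\alpha$-symbols.

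First I would compute $C_\lambda$ and $D_\lambda$ from the defining formula \eqref{eq:Coefficients1}. Inserting $(A_\lambda,B_\lambda)=(1,-1)$ yields
\begin{align*}
  C_\lambda = (\lambda + p) - p(-1) = \lambda + 2p, \qquad D_\lambda = -(n-p) + (\lambda + n - p)(-1) = -(\lambda + 2n - 2p),
\end{align*}
exactly the values already recorded in the text just after \eqref{eq:BGCoefficients}. Substituting $\lambda \mapsto 2(\lambda-n)$ and $\lambda\mapsto -2\lambda$ gives
\begin{align*}
  C_{2(\lambda-n)} = 2(\lambda - n + p), \quad C_{-2\lambda} = 2(p - \lambda), \quad D_{2(\lambda-n)} = 2(p - \lambda), \quad D_{-2\lambda} = 2(\lambda - n + p),
\end{align*}
so that
\begin{align*}
  C_{2(\lambda-n)}C_{-2\lambda} = 4(\lambda - n + p)(p - \lambda) = D_{2(\lambda-n)}D_{-2\lambda},
\end{align*}
which is precisely the hypothesis required to invoke Theorem \ref{ConvolutionIndentityGeneralRiesz}.

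Next I would apply Theorem \ref{ConvolutionIndentityGeneralRiesz}, which immediately gives
\begin{align*}
  R^{2(\lambda-n)}_{1,-1} * R^{-2\lambda}_{1,-1}(x) = \pi^n \frac{\Gamma(\tfrac{2\lambda-n}{2})\Gamma(\tfrac{-2\lambda+n}{2})}{2^2\,\Gamma(-\lambda+n+1)\Gamma(\lambda+1)} \, C_{2(\lambda-n)}C_{-2\lambda} \, \delta_0(x).
\end{align*}
It remains to identify the prefactor $C_{2(\lambda-n)}C_{-2\lambda}/4$ with $\alpha_{\frac{-2\lambda+n}{2}} \alpha_{\frac{2\lambda-n}{2}}$. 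Using \eqref{eq:BGCoefficients} one directly checks
\begin{align*}
  \alpha_{\frac{2\lambda-n}{2}} = \tfrac{n}{2} - p + \tfrac{2\lambda-n}{2} = \lambda - p, \qquad \alpha_{\frac{-2\lambda+n}{2}} = \tfrac{n}{2} - p + \tfrac{-2\lambda+n}{2} = n - p - \lambda,
\end{align*}
hence $\alpha_{\frac{-2\lambda+n}{2}}\alpha_{\frac{2\lambda-n}{2}} = (n-p-\lambda)(\lambda-p) = (\lambda-n+p)(p-\lambda) = \tfrac{1}{4} C_{2(\lambda-n)}C_{-2\lambda}$, which gives exactly \eqref{eq:ConvolutionIndentityRiesz2}.

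There is no serious obstacle here; the main point worth flagging is consistency of the sign, since both $(\lambda - n + p)$ and $(p - \lambda)$ individually carry a sign that could be shuffled between the two $\alpha$-factors. This resolves cleanly because the product is symmetric under swapping the two arguments, and so the reordering into the product of $\alpha$'s advertised in the statement is forced once one confirms the numerical identification above.
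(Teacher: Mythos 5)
Your proposal is correct and follows exactly the route the paper intends: the paper derives Corollary \ref{ConvolutionIndentityRiesz2} by specialising Theorem \ref{ConvolutionIndentityGeneralRiesz} to $(A_\lambda,B_\lambda)=(1,-1)$, using the coefficients $C_\lambda=\lambda+2p$, $D_\lambda=-(\lambda+2n-2p)$ recorded in Subsection \ref{SubsectionKSDF}, and your verification of the hypothesis $C_{2(\lambda-n)}C_{-2\lambda}=D_{2(\lambda-n)}D_{-2\lambda}$ together with the identification $\tfrac14 C_{2(\lambda-n)}C_{-2\lambda}=\alpha_{\frac{-2\lambda+n}{2}}\alpha_{\frac{2\lambda-n}{2}}$ is exactly the bookkeeping the paper leaves implicit. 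All the algebra checks out.
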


 The integral operator \eqref{eq:ConvolutionOperatorGeneralRiesz} specializes to 
 \begin{align}\label{eq:ConvolutionOperatorRiesz2}
  (\mathbb{T}_{\nu,(1,-1)}\omega)(x)=\int_{\R^n}\abs{x-y}^{2(\nu-n-1)}
    (i_{x-y}\varepsilon_{x-y}-\varepsilon_{x-y}i_{x-y})\omega(y)\dm y
 \end{align}
 for $\omega\in\Omega^p(\R^n)$. From  Theorem \ref{ResidueIntegralOperatorGeneralRieszAssumption}
 we obtain the following.

 \begin{corollary}\label{ResiduesOfIntegralOperator}
  The residue of $\mathbb{T}_{\nu,(1,-1)}$ at $\nu=\frac n2-k$, $k\in\N_0$, is given by 
  \begin{align}\label{eq:ResiduesOfIntegralOperator}
    \Res_{\nu=\frac n2-k}\big( (\mathbb{T}_{\nu,(1,-1)}\omega)\big)
      =\frac{(-1)^k2 \pi^{\frac n2}}{4^k k! \Gamma(\frac n2+k+1)}L_{2k}^{(p)}\omega,
  \end{align} 
  where $\omega\in\Omega^p(\R^n)$.
 \end{corollary}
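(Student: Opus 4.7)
The plan is to deduce this directly from Theorem \ref{ResidueIntegralOperatorGeneralRieszAssumption} after verifying that the hypothesis $C_{-n}=D_{-n}$ holds for the parameter pair $(A_\lambda,B_\lambda)=(1,-1)$ and then translating the coefficients $C_{-n-2k},D_{-n-2k}$ into the Branson--Gover coefficients $\alpha_k,\beta_k$. Since $\mathbb{T}_{\nu,(1,-1)}$ is convolution with $R^{2(\nu-n)}_{1,-1}$, the poles appear at the values of $\nu$ for which $2(\nu-n)=-n-2k$, i.e. $\nu=\tfrac{n}{2}-k$ with $k\in\N_0$, as stated.

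Concretely, I would first compute from \eqref{eq:Coefficients1} that
\begin{align*}
C_\lambda=(\lambda+p)\cdot 1-p\cdot(-1)=\lambda+2p,\qquad D_\lambda=-(n-p)+(\lambda+n-p)(-1)=-\lambda-2(n-p).
\end{align*}
In particular $C_{-n}=-n+2p=D_{-n}$, so the assumption of Theorem \ref{ResidueIntegralOperatorGeneralRieszAssumption} is satisfied. Substituting $\lambda=-n-2k$ gives
\begin{align*}
C_{-n-2k}=-n-2k+2p=-2\bigl(\tfrac{n}{2}-p+k\bigr)=-2\alpha_k,\qquad D_{-n-2k}=n+2k-2(n-p)=-2\beta_k,
\end{align*}
via the definitions \eqref{eq:BGCoefficients}.

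Next I would feed these into formula \eqref{eq:ResidueIntegralOperatorGeneralRieszAssumption}: for $\omega\in\Omega^p(\R^n)$,
\begin{align*}
\Res_{\nu=\frac n2-k}(\mathbb{T}_{\nu,(1,-1)}\omega)
&=\frac{(-1)^{k+1}\pi^{\frac n2}}{4^k k!\,\Gamma(\tfrac n2+k+1)}\bigl(C_{-n-2k}(\delta\dm)^k+D_{-n-2k}(\dm\delta)^k\bigr)\omega\\
&=\frac{(-1)^{k+1}\pi^{\frac n2}}{4^k k!\,\Gamma(\tfrac n2+k+1)}\bigl(-2\alpha_k(\delta\dm)^k-2\beta_k(\dm\delta)^k\bigr)\omega\\
&=\frac{(-1)^k 2\pi^{\frac n2}}{4^k k!\,\Gamma(\tfrac n2+k+1)}L_{2k}^{(p)}\omega,
\end{align*}
which is exactly \eqref{eq:ResiduesOfIntegralOperator}. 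The only subtle point (and it is not really an obstacle here) is correctly absorbing the two minus signs from $C_{-n-2k}=-2\alpha_k$ and $D_{-n-2k}=-2\beta_k$ into the global sign $(-1)^{k+1}$ to produce $(-1)^k\cdot 2$; apart from that, the corollary is a purely computational specialization and no further ideas are needed.
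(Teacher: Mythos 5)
Your proposal is correct and follows exactly the route the paper intends: the corollary is stated as a direct specialization of Theorem \ref{ResidueIntegralOperatorGeneralRieszAssumption}, and your verification that $C_{-n}=D_{-n}$ together with the identifications $C_{-n-2k}=-2\alpha_k$, $D_{-n-2k}=-2\beta_k$ and the resulting sign bookkeeping is precisely the computation the paper leaves implicit.
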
 
 
 Now we show the intertwining property of $\mathbb{T}_{\nu,(1,-1)}$ with respect to the 
 principal series, acting on differential forms.
 For that let us introduce some notation.
 Let $G=SO_0(n+1,1,\R)$ the connected component of the group preserving the inner product 
 \begin{align*}
   2x_0 x_{n+1}+x_1^2+\cdots+x_n^2
 \end{align*}
 on $\R^{n+2}$. Let $P_+\st Stab(\R e_0)\subset G$ and 
 $P_-\st Stab(\R e_{n+1})\subset G$ be the stabilizer of the line $\R e_0$, respectively $\R e_{n=1}$, where 
 $\{e_0,\ldots,e_{n+1}\}$ is the standard basis of $\R^{n+2}$. The groups $P_\pm$ are parabolic in $G$ 
 and have Langlands decompositions $P_\pm=MAN_\pm$ with 
 $M\simeq SO(n,\R)$, $A\simeq \R^+$ and $N_\pm\simeq \R^n$. 
 Corresponding Lie algebras are denoted by 
 $\mathfrak{g}(\R)$ and 
 $\mathfrak{p}_\pm(\R)=\mathfrak{m}(\R)\oplus \mathfrak{a}(\R)\oplus\mathfrak{n}_\pm(\R)$ with 
 $\mathfrak{m}(\R)\simeq \mathfrak{so}(n,\R)$, $\mathfrak{a}(\R)\simeq \R$ and 
 $\mathfrak{n}_\pm(\R)\simeq \R^n$. 
 We define a representation of $P_+$ as an trivial extention to $N_+$ of the tensorproduct of the 
 standard representation $(\Lambda^p(\R^n)^*,\sigma_p)$ of $M$
 and a $1$-dimensional represenation $(\C_\lambda,\xi_\lambda)$ of $A$, 
 i.e. $P_+$ acts on 
 $V_{\lambda,p}\st \Lambda^p(\mathfrak{n}_-(\R))^*\otimes \C_{-\lambda}\simeq \Lambda^p(\R^n)^*\otimes \C_{-\lambda}$ by
 \begin{align*}
   \rho_{\lambda,p}:P_+=MAN_+&\to GL(V_{\lambda,p})\\
   man&\mapsto\{(v\otimes 1)\mapsto \sigma_p(m)v\otimes a^{-\lambda}\}.
 \end{align*}
 Now, the space of sections $\Gamma(G\times_{P_+,\rho_{\lambda,p}}V_{\lambda,p})$ is 
 equivalent to the space of $P_+$-equivariant functions 
 \begin{align*}
   \mathcal{C}^\infty(G,V_{\lambda,p})^{P_+}
    =\{f:G\to V_{\lambda,p}\mid f(gp^\prime)=\rho_{\lambda,p}((p^\prime)^{-1})f(g)\quad\forall p^\prime\in P_+\}.
 \end{align*}
 Let us denote 
 by $\pi_{\lambda,p}$ the regular left representation
(strictly speaking anti-representation)
 of $G$ on $\mathcal{C}^\infty(G,V_{\lambda,p})^{P_+}$, i.e. 
 $\pi_{\lambda,p}(g)f(g^\prime)\st f(g\cdot g^\prime)$. 
 \begin{lem}\cite[Lemma $2.3.3$]{FJS2}
  The infinitesimal action $\dm \pi_{\lambda,p}(E_j^+)$ on 
  $u\otimes \omega\in \mathcal{C}^\infty(\mathfrak{n}_-(\R))\otimes \Lambda^p(\mathfrak{n}_-(\R))^*\simeq\mathcal{C}^\infty(\R^n)\otimes \Lambda^p(\R^n)^*$ is given by 
  \begin{align*}
   \dm \pi_{\lambda,p}(E_j^+)(u\otimes\omega)(x)
    &=\Big(-\frac 12\sum_{k=1}^nx_k^2\partial_{x_j}+x_j(-\lambda
      +\sum_{k=1}^n x_k\partial_{x_k})\Big)(u)\otimes \omega\\
    &+\sum_{k=1}^nx_k u\otimes((E^+_j)^*\wedge  i_{E_k^-}-(E^+_k)^*\wedge i_{E_j^-})(\omega),
  \end{align*}
  where $\{E^\pm_j\}$ denotes a basis of $\mathfrak{n}_\pm(\R)\simeq \R^n$ and $\{(E^\pm_j)^*\}$ denotes its dual. 
 \end{lem}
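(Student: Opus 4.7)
The plan is to identify sections with functions on $\mathfrak{n}_-(\R)$ via the open Bruhat cell and then perform the standard infinitesimal computation inside $G = SO_0(n+1,1,\R)$. Since $N_- P_+$ is open and dense in $G$, any $f \in \mathcal{C}^\infty(G,V_{\lambda,p})^{P_+}$ is determined by the restriction $F(X) := f(\exp X)$ to $N_-$, and this restriction identifies $\mathcal{C}^\infty(G,V_{\lambda,p})^{P_+}$ with $\mathcal{C}^\infty(\mathfrak{n}_-(\R)) \otimes \Lambda^p(\mathfrak{n}_-(\R))^*$ in the way used in the statement.

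Next, for $X = \sum_k x_k E_k^- \in \mathfrak{n}_-$ and small $t\in\R$, I would compute the Bruhat decomposition
\begin{equation*}
  \exp(tE_j^+)\exp(X) \;=\; \exp(X(t))\cdot m(t)\cdot a(t)\cdot n(t),
\end{equation*}
with $X(t)\in \mathfrak{n}_-(\R)$, $m(t)\in M$, $a(t)\in A$, $n(t)\in N_+$, using the defining matrix realization of $SO_0(n+1,1,\R)$ preserving $2x_0x_{n+1}+\sum_k x_k^2$. Differentiating the $P_+$-equivariance
\begin{equation*}
  f(\exp(tE_j^+)\exp(X)) \;=\; \rho_{\lambda,p}\bigl(n(t)^{-1}a(t)^{-1}m(t)^{-1}\bigr)\,F(X(t))
\end{equation*}
at $t=0$ produces $d\pi_{\lambda,p}(E_j^+)(u\otimes\omega)$. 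Because $N_+$ acts trivially on $V_{\lambda,p}$, the factor $n(t)$ drops out; only $X(t)$, $a(t)$, and $m(t)$ contribute.

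Finally, each contribution is collected. The standard computation in the matrix model gives $\dot X(0) = -\frac12 \sum_k x_k^2 E_j^- + x_j \sum_k x_k E_k^-$, which, transported through $F$, produces the first-order differential operator $\bigl(-\frac12\sum_k x_k^2 \partial_{x_j} + x_j \sum_k x_k \partial_{x_k}\bigr)u \otimes \omega$ on the $\mathfrak{n}_-$-variable. The $\mathfrak{a}$-part $\dot a(0)$ contributes $-\lambda x_j u\otimes\omega$ through $\xi_{-\lambda}$. The $\mathfrak{m}$-part $\dot m(0)$ is an infinitesimal rotation in the $(E_j,E_k)$-plane weighted by $x_k$, and its image under $d\sigma_p$ on $\Lambda^p(\mathfrak{n}_-(\R))^*$ is precisely $(E_j^+)^*\wedge i_{E_k^-} - (E_k^+)^*\wedge i_{E_j^-}$. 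Summing the three pieces yields the formula. The only real obstacle is the bookkeeping needed to extract the factors $a(t), m(t), X(t)$ from the Bruhat decomposition to first order in $t$; since this is a classical computation for the conformal group (and indeed the result is quoted verbatim from \cite{FJS2}), one may either carry out the matrix product explicitly or simply refer to that source.
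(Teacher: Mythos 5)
The paper offers no proof of this lemma at all: it is imported verbatim from \cite[Lemma 2.3.3]{FJS2}, so there is no argument of the authors' to compare yours against. Your outline is the standard (and correct) derivation of the noncompact picture: restrict equivariant functions to the open Bruhat cell $N_-P_+$, factor $\exp(tE_j^+)\exp(X)$ as $\exp(X(t))\,m(t)\,a(t)\,n(t)$ for small $t$, differentiate the equivariance relation at $t=0$, and read off the three contributions from $\dot X(0)$, $\dot a(0)$, $\dot m(0)$ (with $n(t)$ dropping out because $N_+$ acts trivially). This is certainly the route the cited source takes, and it is compatible with the paper's convention that $\pi_{\lambda,p}(g)f(g')=f(gg')$ is a left (anti-)representation, so differentiating $f(\exp(tE_j^+)\exp X)$ is the right thing to do.

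That said, as written your proposal defers precisely the step that constitutes the proof. All of the content of the lemma sits in the first-order Bruhat factorization: the coefficient $-\tfrac12$ in front of $\sum_k x_k^2\,\partial_{x_j}$, the sign of the $-\lambda x_j$ term, and the exact form of the $\mathfrak m$-contribution $(E_j^+)^*\wedge i_{E_k^-}-(E_k^+)^*\wedge i_{E_j^-}$ all depend on the matrix model for the quadratic form $2x_0x_{n+1}+\sum_k x_k^2$, on the chosen normalization of the basis $\{E_j^\pm\}$ of $\mathfrak n_\pm(\R)$, and on the fact that $\rho_{\lambda,p}$ is evaluated at $(man)^{-1}$ (so the $a^{-\lambda}$ in the definition becomes $a^{+\lambda}$ after inversion, and the net sign of the $\mathfrak a$-term must be tracked, not guessed). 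Asserting these as ``the standard computation'' is acceptable only because the result is being quoted from \cite{FJS2}; if you intend this as a self-contained proof, you must actually write out the $3$-by-$3$ block matrices for $\exp(tE_j^+)$ and $\exp(X)$, multiply, and extract $X(t)$, $a(t)$, $m(t)$ to first order in $t$. Nothing in your plan would fail, but until that computation is performed the proposal is a correct strategy rather than a proof.
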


 In these conventions the 
 Branson-Gover operators \eqref{eq:BGOperator} satisfy the infinitesimal intertwining property
 \begin{align}\label{eq:InvarianzBG}
  \dm \pi_{-\frac{n}{2}-N,p}(X) L_{2N}^{(p)}= L_{2N}^{(p)}\dm\pi_{-\frac{n}{2}+N,p}(X),
    \quad \forall\;X\in \mathfrak{so}(n+1,1,\R).
 \end{align}
 We show that the relation \eqref{eq:InvarianzBG} 
 extend to $\mathbb{T}_{\nu,(1,-1)}$.
 \begin{prop}\label{IntertwiningRiesz2}
  The intergral operators $\mathbb{T}_{\nu,(1,-1)}$ satisfy
  \begin{align}\label{eq:IntertwiningRiesz2}
    \dm \pi_{\nu-n,p}(X) \mathbb{T}_{\nu,(1,-1)}= \mathbb{T}_{\nu,(1,-1)}\dm\pi_{-\nu,p}(X),
    \quad \forall\;X\in \mathfrak{so}(n+1,1,\R).
  \end{align}
 \end{prop}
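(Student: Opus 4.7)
The plan is to verify \eqref{eq:IntertwiningRiesz2} on a generating set of $\mathfrak{so}(n+1,1,\R)$ coming from the decomposition $\mathfrak{so}(n+1,1,\R)=\mathfrak{n}_-(\R)\oplus\mathfrak{m}(\R)\oplus\mathfrak{a}(\R)\oplus\mathfrak{n}_+(\R)$. The equivariance under the first three summands will follow essentially for free from structural properties of the convolution kernel
\[
  R^{2(\nu-n)}_{1,-1}(x-y)=r^{2(\nu-n-1)}(x-y)\bigl(i_{x-y}\varepsilon_{x-y}-\varepsilon_{x-y}i_{x-y}\bigr),
\]
so all real work will be concentrated in a single calculation for $X\in\mathfrak{n}_+(\R)$.

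For $X\in\mathfrak{n}_-(\R)$ the infinitesimal action $\dm\pi_{\mu,p}(X)$ is a $\mu$-independent constant-coefficient vector field on $\R^n$, and $\mathbb{T}_{\nu,(1,-1)}$ commutes with translations because its kernel depends only on $x-y$. For $X\in\mathfrak{m}(\R)\cong\mathfrak{so}(n,\R)$ the action is the simultaneous rotation of $\R^n$ and of $\Lambda^p(\R^n)^*$; since $r$ and the endomorphism $i_v\varepsilon_v-\varepsilon_v i_v$ are both $SO(n)$-equivariant in $v$, the kernel is $SO(n)$-equivariant and the intertwining follows by a change of variables. For the generator $H$ of $\mathfrak{a}(\R)$ integration by parts in $y$ converts the Euler field $\sum_k y_k\partial_{y_k}$ acting on $\omega$ into $\sum_k x_k\partial_{x_k}$ acting on $\mathbb{T}_{\nu,(1,-1)}\omega$ plus a scalar correction that, thanks to the homogeneity degree $2(\nu-n)$ of the kernel together with the Jacobian contribution $n$, equals exactly the shift $n-2\nu$ needed to pass from the $\pi_{-\nu,p}$-weight to the $\pi_{\nu-n,p}$-weight.

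The core of the proof is $X=E_j^+$. Using the infinitesimal formula from the cited lemma I would compute $\mathbb{T}_{\nu,(1,-1)}\bigl(\dm\pi_{-\nu,p}(E_j^+)\omega\bigr)(x)$ and integrate by parts in $y$ to move the first- and second-order derivatives acting on $\omega$ onto the kernel. The identities $\partial_{y_k}r^\nu(x-y)=\nu(x_k-y_k)r^{\nu-2}(x-y)$ from \eqref{eq:Observation} and $i_v\varepsilon_v+\varepsilon_vi_v=r^2(v)$ from Lemma~\ref{Identities1} convert these derivatives into algebraic expressions in $x-y$ times the kernel, after which one rewrites $y_k=x_k-(x_k-y_k)$ so that the free $x$-factors produced by $\dm\pi_{\nu-n,p}(E_j^+)$ outside the integral are reproduced, while the $(x-y)$-pieces must cancel against the form-valued algebraic term $x_k\bigl((E_j^+)^*\wedge i_{E_k^-}-(E_k^+)^*\wedge i_{E_j^-}\bigr)$.

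The main obstacle is the bookkeeping of three distinct types of contributions on each side: pure quadratic $\sum_k(x_k-y_k)^2\partial_{y_j}$ terms, Euler-type $x_j\sum_k(x_k-y_k)\partial_{y_k}$ terms, and form-valued algebraic terms. Matching them relies crucially on the antisymmetry of $i_{x-y}\varepsilon_{x-y}-\varepsilon_{x-y}i_{x-y}$ under $x-y\mapsto y-x$; this is the algebraic feature distinguishing the choice $(A_\lambda,B_\lambda)=(1,-1)$ among the family \eqref{eq:GeneralRiesz}, and is ultimately the reason why only for this specific choice does the convolution operator intertwine the principal series representations $\pi_{-\nu,p}$ and $\pi_{\nu-n,p}$.
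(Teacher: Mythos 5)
Your overall strategy is legitimately different from the paper's: you verify the relation generator by generator using the decomposition $\mathfrak{so}(n+1,1,\R)=\mathfrak{n}_-(\R)\oplus\mathfrak{m}(\R)\oplus\mathfrak{a}(\R)\oplus\mathfrak{n}_+(\R)$ and attack $\mathfrak{n}_+(\R)$ head-on in physical space, whereas the paper works entirely on the Fourier side: there the difference of the two sides of \eqref{eq:IntertwiningRiesz2}, after cancelling common powers of $r$, is a polynomial of degree $3$ in $\nu$ with operator coefficients, and it vanishes identically because it vanishes at the infinitely many residue points $\nu=\frac n2-k$, where \eqref{eq:IntertwiningRiesz2} reduces to the known conformal invariance \eqref{eq:InvarianzBG} of the Branson--Gover operators. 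Your treatment of $\mathfrak{n}_-(\R)$, $\mathfrak{m}(\R)$ and $\mathfrak{a}(\R)$ is correct (translation invariance of a convolution kernel, $SO(n)$-equivariance of $r$ and of $i_v\varepsilon_v-\varepsilon_v i_v$, and the homogeneity count $2(\nu-n)+n=2\nu-n$ matching the difference of the $\mathfrak{a}$-weights). But these are the routine parts; the entire content of the proposition sits in the $\mathfrak{n}_+(\R)$ computation, and that computation is only announced, not performed. As it stands the proposal is a plan, not a proof.

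More seriously, the mechanism you name as the reason the $\mathfrak{n}_+$ terms cancel is wrong. The endomorphism $i_{x-y}\varepsilon_{x-y}-\varepsilon_{x-y}i_{x-y}$ is \emph{quadratic} in $x-y$ (both $i_v$ and $\varepsilon_v$ are linear in $v$), hence it is \emph{even} under $x-y\mapsto y-x$, not antisymmetric; it is also self-adjoint, since $i_v$ and $\varepsilon_v$ are mutually adjoint by Lemma \ref{Identities1}. So the ``crucial antisymmetry'' you invoke does not exist, and an argument resting on it would not close. The property that actually singles out $(A_\lambda,B_\lambda)=(1,-1)$ is the one recorded in Remark \ref{IntegralOperatorsComparision}: $r^{-2}(v)\,(i_v\varepsilon_v-\varepsilon_v i_v)$ is the orthogonal reflection in the hyperplane $v^{\perp}$, i.e.\ precisely the cocycle of the conformal inversion acting on $p$-forms; equivalently, on the Fourier side the transformed coefficients $C_\lambda,D_\lambda$ are again proportional to $\alpha,\beta$ (Corollary \ref{FourierRiesz2}). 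If you want to keep your generator-by-generator route, the clean way to handle $\mathfrak{n}_+(\R)$ is not a brute-force integration by parts but to write its generators as $w$-conjugates of translations and use this reflection/cocycle identity for the inversion $w$; alternatively, adopt the paper's polynomial-in-$\nu$ argument, which avoids the computation altogether at the price of quoting \eqref{eq:InvarianzBG}. Finally, a small slip: \eqref{eq:Observation} reads $\partial_k r^{\nu}(x-y)=-\nu(x_k-y_k)r^{\nu-2}(x-y)$ for $y$-derivatives; you dropped the minus sign.
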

 \begin{proof}
  We prove the claim in the Fourier image. First note the identity and definition 
  \begin{align*}
    \mathcal{F}(  \dm \pi_{\lambda,p}(X)\omega)
      &=-i\Big(\frac 12 \xi_j\Delta_\xi -((\lambda+n)+\sum_{k=1}^n \xi_k\partial_{\xi_k})\partial_{\xi_j}\\
      &\quad\quad+\sum_{k=1}^n\partial_{\xi_k}((E^+_j)^*\wedge  i_{E_k^-}
       -(E^+_k)^*\wedge i_{E_j^-})  \Big)\mathcal{F}(\omega)\\
    &\st D_2(\lambda,j)\mathcal{F}(\omega).
  \end{align*}
  Hence $D_2(\lambda,j)$ is a second order differential operator on differential forms. 
  In order to show \eqref{eq:IntertwiningRiesz2} it remains to verify that the difference of 
  \begin{align*}
    \mathcal{F}(  \dm \pi_{\nu-n,p}(X) \mathbb{T}_{\nu,(1,-1)} )(\xi)
      &= D_2(\nu-n,j)\mathcal{F}(R^{2(\nu-n)}_{1,-1})(\xi)\mathcal{F}(\omega)(\xi)\\
    &=c_1D_2(\nu-n,j)\big(( r^{-2\nu+n-2}(\xi)( \alpha_{\frac{-2\nu+n}{2}}i_\xi\varepsilon_\xi
      +\beta_{\frac{-2\nu+n}{2}}\varepsilon_\xi i_\xi ))\mathcal{F}(\omega)(\xi) \big)
  \end{align*}
  and  
  \begin{align*}
    \mathcal{F}( \mathbb{T}_{\nu,(1,-1)}\dm\pi_{-\nu,p}(X))(\xi)
      &=\mathcal{F}(R^{2(\nu-n)}_{1,-1})(\xi)D_2(-\nu,j)\mathcal{F}(\omega)(\xi)\\
   &=c_1 r^{-2\nu+n-2}(\xi)( \alpha_{\frac{-2\nu+n}{2}}i_\xi\varepsilon_\xi
     +\beta_{\frac{-2\nu+n}{2}}\varepsilon_\xi i_\xi )D_2(-\nu,j)\mathcal{F}(\omega)(\xi).
  \end{align*}
  vanishes.
  Here $c_1$ is the constant arising from the Fourier transform 
  of $R^{2(\nu-n)}_{1,-1}(x)$. Performing the differentiation with $D_2(\nu-n,j)$ and $D_2(-\nu,j)$, 
  canceling the common contribution of $r^\mu$ for appropriate $\mu$ and 
  expanding everything in powers of $\nu$, the difference become 
  \begin{align*}
    P_3(\omega) \nu^3+P_2(\omega) \nu^2+P_1(\omega) \nu^1+P_0(\omega)=0,
  \end{align*}
  for some coefficients $P_i(\omega)$, $i=0,\ldots 3$, which are 
  differential operators acting on $\omega$. That it is of third degree in $\nu$ comes from the fact that 
  $D_2(\mu,j)$ is of second order containing a first order contribution with coefficient $\mu$ and 
  the coefficients $\alpha_\mu$ and $\beta_\mu$ are linear in $\mu$. Since the left-hand side 
  is a polynomial of order $3$, it remains to check 
  its vanishing at $4$ different point. Since Equation \eqref{eq:IntertwiningRiesz2} 
  is satisfied at all residues of $\mathbb{T}_{\nu,(1,-1)}$, that means at infinitely many, 
  due to Equation \eqref{eq:InvarianzBG}, the proof is complete.
 \end{proof}

 \begin{remark}\label{IntegralOperatorsComparision}
  The intertwining property of the intergral operator $\mathbb{T}_{\nu,(1,-1)}$ was already studied in 
  \cite[Section $4$]{SpehVenkataramana}. Actually they have studied the 
  Knapp-Stein intertwining operator for differential forms. It just remains to show that our formula for 
  $\mathbb{T}_{\nu,(1,-1)}$ matches with that in \cite{SpehVenkataramana}. 
  This can be seen by noting that, when acting on a vector $Y$ in $\R^n$ 
  (a dual $1$-form), our algebraic action
  \begin{align*}
    \frac{i_x\varepsilon_x-\varepsilon_x i_x}{\abs{x}^2}Y=Y-2\frac{\langle Y,x\rangle}{\abs{x}^2}x
  \end{align*}
  becomes a reflection of $Y$ in the hyperplane (through the origin) orthogonal to $x$, compare with 
  \cite[Lemma $2.1$]{SpehVenkataramana}. Note that this formula extends
naturally to $p$-forms. Note also that 
  our notation for the algebraic action enables us to compute explicitly the residues
  of $\mathbb{T}_{\nu,(1,-1)}$, or even in a more general setting for $\mathbb{T}_{\nu,(A_\lambda,B_\lambda)}$.
 \end{remark}
 
\begin{remark}\label{ComplementarySeries}
 The intertwining operator $\mathbb{T}_{\nu,(1,-1)}$ defines an invariant pairing between
 two induced representations in natural duality (via the natural $L^2$ pairing), namely
 $\pi_{\nu - n, p}$ and $\pi_{-\nu,p}$ - here we consider real values of $\nu$, and
 from Corollary \ref{FourierRiesz2} we see that the interval where this pairing is
positive-definite (i.e. the interval for the unitary complementary series) is
exactly $|\lambda| < (n/2) - p$, with $\lambda = \nu - (n/2)$. 
Indeed, the invariant Hermitian form is on the Fourier transform side given as the
natural $L^2$ expression (the Fourier transform of $(\mathbb{T}_{\nu,(1,-1)}\omega, \omega)$)
\begin{align*}
2^{2\lambda}\pi^{n/2}\frac{\Gamma(\lambda)}{\Gamma((n/2) +1-\lambda)}
\int_{\mathbb{R}^n} |\xi|^{-2\lambda - 2}(((n/2) - p - \lambda) i_{\xi}\varepsilon_{\xi} + ((n/2) - p + \lambda)
\varepsilon_{\xi} i_{\xi})\widehat{\omega}, \widehat{\omega}) d\xi
\end{align*}
which we see as positive definite in the interval indicated; furthermore, for
$$0 < \lambda < (n/2) - p$$ the density here is locally integrable. Note that
in this case $\mathbb{T}_{\nu,(1,-1)}$ gives an equivalence between two unitary
representations in the complementary series.

This is consistent
with the formulas in \cite{BOO}, and gives a new and elementary proof of the size of
the unitary complementary series corresponding to $p$-forms. Indeed, in \cite{BOO}
the eigenvalues of the intertwining operator in its compact picture on the sphere
are shown to be labeled by integers $j \geq 1 $ and $q = 0,1$ (corresponding
to exact and co-exact differential forms respectively) as
$$Z(j,q,\lambda) = \frac{\Gamma((n/2) + j + \lambda)}{\Gamma((n/2) + j - \lambda)}
 \frac{\Gamma((n/2) - p + q + \lambda)}{\Gamma((n/2) - p + q - \lambda)}$$
suitably normalized. The pair $(j,q)$ corresponds to the highest weight $$(j,1,1, ,\dots, 1, q, 0, ,\dots)$$
of an irreducible representation of $K = SO(n+1)$, and $\lambda$ is the same parameter as before.
Note that the ratio between this eigenvalue for $q = 0$ and $q = 1$ is exactly
$((n/2) - p - \lambda)/((n/2) - p + \lambda)$. 
\end{remark}

 Let us close this subsection with a comment about the impact of the Bernstein-Sato 
 identity, see Corollary \ref{BernsteinSatoRiesz2}, for $R^\lambda_{1,-1}(x)$ to 
 a recurrence relation among Branson-Gover operators. This recurrence relation can 
 also be directly obtained from \eqref{eq:BGOperator}. However, we want to demonstrate its appearence 
 from the Bernstein-Sato identity. 
 \begin{prop} 
   The Branson-Gover operators $L_{2N}^{(p)}$, for $N\in\N$, on $\R^n$ satisfy
   \begin{align}\label{eq:RecurrenceOfBG}
     L_{2N}^{(p)}=\Big(\frac{\alpha_{N}}{\alpha_{N-1}}\delta\dm +\frac{\beta_N}{\beta_{N-1}}\dm\delta \Big)
       L_{2N-2}^{(p)}.
   \end{align}
   By convention we set $L_0^{(p)}\st \alpha_0\id$. 
 \end{prop}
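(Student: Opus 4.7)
The plan is to derive the recurrence from the Bernstein--Sato identity in Corollary \ref{BernsteinSatoRiesz2}, viewed as a meromorphic identity in $\lambda$, and then take residues at the value of $\lambda$ where \emph{both} $R^{\lambda}_{1,-1}$ and $R^{\lambda-2}_{1,-1}$ have simple poles. The critical observation is that the Bernstein--Sato identity shifts the spectral parameter by $2$, so picking $\lambda = -n-2(N-1)$ makes $R^{\lambda}_{1,-1}$ produce a multiple of $L^{(p)}_{2N-2}\delta_0$ and $R^{\lambda-2}_{1,-1}$ produce a multiple of $L^{(p)}_{2N}\delta_0$, in view of Corollary \ref{ResiduesRiesz2}. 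The recurrence relation between the two Branson--Gover operators should then fall out by equating residues.

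First I would evaluate the four linear factors appearing on both sides of Corollary \ref{BernsteinSatoRiesz2} at $\lambda = -n - 2N + 2$. A direct substitution together with \eqref{eq:BGCoefficients} yields the clean factorizations
\begin{align*}
\lambda + 2p - 2 &= -2\alpha_N, & \lambda + 2n - 2p &= 2\beta_{N-1}, \\
\lambda + 2p &= -2\alpha_{N-1}, & \lambda + 2n - 2p - 2 &= 2\beta_N,
\end{align*}
so that the differential operator on the LHS becomes $-4\alpha_N\beta_{N-1}\,\delta d - 4\alpha_{N-1}\beta_N\,d\delta$. The polynomial scalar on the RHS evaluates to $8N(n+2N)\,\alpha_{N-1}\beta_{N-1}$ after using $\lambda - 2 = -n-2N$ and $\lambda + n - 2 = -2N$.

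Next I would take residues of both sides as $\lambda \to -n-2(N-1)$. The LHS coefficients are polynomial in $\lambda$, hence regular, and the residue of $R^{\lambda}_{1,-1}$ is given by Corollary \ref{ResiduesRiesz2} with $k = N-1$; the residue of $R^{\lambda-2}_{1,-1}$ at the same $\lambda$ is the residue of $R^{\mu}_{1,-1}$ at $\mu = -n-2N$, i.e.\ the $k=N$ formula. The key arithmetic simplification is the identity $n+2N = 2(n/2+N)$, which makes the ratio of Gamma factors $\Gamma(n/2+N+1)/\Gamma(n/2+N)$ and of factorials $N!/(N-1)!$ combine to absorb the factor $8N(n+2N)$ completely. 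After this cancellation the residue equality reduces to
\begin{align*}
\bigl[\alpha_N\beta_{N-1}\,\delta d + \alpha_{N-1}\beta_N\,d\delta\bigr]\,L^{(p)}_{2N-2}\delta_0
 = \alpha_{N-1}\beta_{N-1}\,L^{(p)}_{2N}\delta_0.
\end{align*}
Since both sides are differential operators applied to $\delta_0$ and such operators are determined by their action on $\delta_0$, dividing by $\alpha_{N-1}\beta_{N-1}$ yields \eqref{eq:RecurrenceOfBG}.

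The main obstacle is the bookkeeping of constants in the residue step: one must carefully track the signs $(-1)^{N-1}$ vs.\ $(-1)^N$, the powers of $4$, and the Gamma/factorial ratios. A useful sanity check is the direct verification: expanding the RHS of \eqref{eq:RecurrenceOfBG} against the definition \eqref{eq:BGOperator} of $L^{(p)}_{2N-2}$, the cross terms $\delta d\,(d\delta)^{N-1}$ and $d\delta\,(\delta d)^{N-1}$ both vanish by $d^2 = 0$ and $\delta^2 = 0$, leaving exactly $\alpha_N(\delta d)^N + \beta_N(d\delta)^N = L^{(p)}_{2N}$. This confirms the sign conventions in the residue derivation and also gives, at no cost, the case $N = 1$ together with the base value $L^{(p)}_{0} = \alpha_0 \id$ by the stated convention.
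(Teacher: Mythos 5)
Your proof is correct and follows essentially the same route as the paper: both derive the recurrence by combining the Bernstein--Sato identity of Corollary \ref{BernsteinSatoRiesz2} with the residue formulas, the only cosmetic difference being that the paper takes residues of the integral operators $\mathbb{T}_{\nu,(1,-1)}$ at $\nu=\frac n2-N$ (Corollary \ref{ResiduesOfIntegralOperator}) while you take residues of the distributions $R^\lambda_{1,-1}$ at $\lambda=-n-2(N-1)$ (Corollary \ref{ResiduesRiesz2}), which is the same computation under the reparametrization $\lambda=2(\nu-n)$. Your evaluation of the linear factors and the cancellation of constants check out, and your closing direct verification is exactly the alternative argument the paper itself acknowledges but deliberately bypasses.
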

 \begin{proof}
  First observe that Corollary \ref{BernsteinSatoRiesz2} implies
  \begin{align*}
    \mathbb{T}_{\nu,(1,-1)}\omega(x)&=\int_{\R^n} R^{2(\nu-n)}_{1,-1}(x-y)\omega(y)\dm y\\
    &=\frac{1}{(2\nu-2n)(2\nu-n)}\Big(\frac{2\nu-2n+2p}{2\nu-2n+2p+2}\delta\dm
      +\frac{2\nu-2p}{2\nu-2p+2}\dm\delta  \Big)\\
    &\times\int_{\R^n}R^{2(\nu-n+1)}_{1,-1}(x-y)\omega(y)\dm y
  \end{align*}
  Now we take the residue at $\nu=\frac n2-N$ for $N\in\N_0$ using 
  Equation \eqref{eq:ResiduesOfIntegralOperator}.
  The residue of the left-hand side is
  \begin{align*}
   \frac{(-1)^N2\pi^{\frac n2}}{4^N N!\Gamma(\frac n2+N+1)}L_{2N}^{(p)},
  \end{align*} 
  while the right-hand side has the residue 
  \begin{align*}
    \frac{(-1)^{N}2\pi^{\frac n2}}{4^{N}N!\Gamma(\frac n2+N+1)}
      \Big(\frac{n-2p+2N}{n-2p+2N-2}\delta\dm+\frac{n-2p-2N}{n-2p-2N+2}\dm\delta\Big)
       L_{2N-2}^{(p)}.
  \end{align*}
  Now a cancelation of common factors completes the proof.
 \end{proof}

%%%%%%%%%%%%%%%%%%%%%%%%%%%%%%%%%%%%%%%%%%%%%%%%%%%%%%%%%%%%%%%%%%%
%%%%%%%%%%%%%%%%%%%%%%%%%%%%%%%%%%%%%%%%%%%%%%%%%%%%%%%%%%%%%%%%%%%
%%%%%%%%%%%%%%%%%%%%%%%%%%%%%%%%%%%%%%%%%%%%%%%%%%%%%%%%%%%%%%%%%%%
\section{Some remarks and further applications}

 This sections collect some observations concerning $R^\lambda_{A_\lambda,B_\lambda}(x)$.

 \begin{remark}\label{Semi-GroupStructure}
   The Riesz distribution $R^\lambda_{1,1}(x)$, when appropriately normalized, satisfies a semi-group 
   property, i.e. for 
   \begin{align*}
     \bar{R}^\lambda_{1,1}(x)\st\frac{\Gamma(-\frac{\lambda-n}{2})}{2^\lambda\pi^{\frac n2}\Gamma(\frac{\lambda}{2})} R^{\lambda-n}_{1,1}(x)
   \end{align*}
   we have, whenever it makes sense, 
   \begin{align*}
     \bar{R}^\lambda_{1,1}*\bar{R}^\nu_{1,1}(x)=\bar{R}^{\lambda+\nu}_{1,1}(x).
   \end{align*}
   In order to prove a semi-group structure for the family of 
   Riesz distributions $R^\lambda_{A_\lambda,B_\lambda}(x)$ one has to allow 
   $A_\lambda,B_\lambda$ to be meromorphic in $\lambda\in\C$. 
   More precisely, define 
   \begin{align*}
     \bar{R}^\lambda_{A_\lambda,B_\lambda}(x)\st-\frac{\Gamma(-\frac{\lambda-n-2}{2})}{2^{\lambda-1}\pi^{\frac n2}\Gamma(\frac{\lambda}{2})} R^{\lambda-n}_{A_{\lambda-n},B_{\lambda-n}}(x),
   \end{align*}
   and show in the Fourier image that 
   \begin{align*}
     \mathcal{F}(\bar{R}^\lambda_{A_\lambda,B_\lambda}*\bar{R}^\nu_{A^\prime_\nu,B^\prime_\nu})(\xi)
       &=\mathcal{F}(\bar{R}^\lambda_{A_\lambda,B_\lambda})(\xi)
         \mathcal{F}(\bar{R}^\nu_{A^\prime_\nu,B^\prime_\nu})(\xi)\\
     &=r^{-(\lambda+\nu)-2}(\xi)(C_{\lambda-n}C^\prime_{\nu-n}i_\xi\varepsilon_\xi
       +D_{\lambda-n}D^\prime_{\nu-n}\varepsilon_\xi i_\xi)
   \end{align*}
   and 
   \begin{align*}
     \mathcal{F}(\bar{R}^{\lambda+\nu}_{A^{\prime\prime}_{\lambda+\nu},B^{\prime\prime}_{\lambda+\nu}})(\xi)
       &=r^{-(\lambda+\nu)-2}(\xi)(C^{\prime\prime}_{\lambda+\nu-n}i_\xi\varepsilon_\xi
       +D^{\prime\prime}_{\lambda+\nu-n}\varepsilon_\xi i_\xi)
   \end{align*}
   do agree iff $C^{\prime\prime}_{\lambda+\nu-n}=C_{\lambda-n}C^\prime_{\nu-n}$ and 
   $D^{\prime\prime}_{\lambda+\nu-n}=D_{\lambda-n}D^\prime_{\nu-n}$ hold. Note that a tuple $(A_\lambda,B_\lambda)$ 
   will give a tuple $(C_\lambda,D_\lambda)$, see \eqref{eq:Coefficients1}. Eqivalently, 
   using \eqref{eq:Coefficients1} we have to solve 
   \begin{align*}
     C_{\lambda-n}C^\prime_{\nu-n}=(\lambda+\nu-n+p)A^{\prime\prime}_{\lambda+\nu-n}
       -p B^{\prime\prime}_{\lambda+\nu-n},\\
     D_{\lambda-n}D^\prime_{\nu-n}=-(n-p)A^{\prime\prime}_{\lambda+\nu-n}
       +(\lambda+\nu-p)B^{\prime\prime}_{\lambda+\nu-n}
   \end{align*}
   for $A^{\prime\prime}_{\lambda+\nu-n},B^{\prime\prime}_{\lambda+\nu-n}$. The unique solution is given by 
   \begin{align*}
     A^{\prime\prime}_{\lambda+\nu-n}&\st \frac{(\lambda+\nu-p)C_{\lambda-n}C^\prime_{\nu-n}+p D_{\lambda-n}D^\prime_{\nu-n}}{(\lambda+\nu)(\lambda+\nu-n)}\\
     B^{\prime\prime}_{\lambda+\nu-n}&\st \frac{(n-p)C_{\lambda-n}C^\prime_{\nu-n}+(\lambda+\nu-n+p) D_{\lambda-n}D^\prime_{\nu-n}}{(\lambda+\nu)(\lambda+\nu-n)}.
   \end{align*}
   Hence we obtain the semi-group property, whenever it makes sense,  
   \begin{align*}
     \bar{R}^\lambda_{A_\lambda,B_\lambda}*\bar{R}^\nu_{A^\prime_\nu,B^\prime_\nu}
      =\bar{R}^{\lambda+\nu}_{A^{\prime\prime}_{\lambda+\nu},B^{\prime\prime}_{\lambda+\nu}}.
   \end{align*}
 \end{remark}

 \begin{remark}
  We have seen in Theorem \ref{FourierGeneralRiesz} that the Fourier transform preserves the family of Riesz 
  distribution $R^\lambda_{\cdot,\cdot}(x)$. In general it holds that the pairs 
  $(A_\lambda,B_\lambda)$ and $(C_{\lambda},D_{\lambda})$, see \eqref{eq:Coefficients1}, have nothing in common. 
  However, for $A_\lambda\st \alpha_\lambda$ and 
  $B_\lambda\st \beta_\lambda$ we have $C_\lambda=-\lambda\alpha_{-\lambda-n}$ and 
  $D_\lambda=-\lambda\beta_{-\lambda-n}$, hence by Theorem \ref{FourierGeneralRiesz} we have 
  \begin{align*}
    \mathcal{F}(R^\lambda_{\alpha_\lambda,\beta_\lambda})(\xi)
     =c_\lambda R^{-\lambda-n}_{\alpha_{-\lambda-n},\beta_{-\lambda-n}}(\xi)
  \end{align*}
  for some constant $c_\lambda$. 
  %????What kind of distribution is that????
 \end{remark}

 \begin{remark}
  Similar operators to the
ones we study here have been considered in connection with
Ahlfors operators and generalized Beurling-Ahlfors operators,
see \cite{IM}; these arise in the study of quasi-regular mappings generalizing
quasi-conformal mappings in two dimensions. Of interest here are
new $L^p$ - estimates for the Beurling-Ahlfors operator on differential forms.
 \end{remark}

 \begin{remark}{(\bf Structure of GJMS operators)}
   Let $(M,g)$ be a manifold. The GJMS operators, \cite{GJMS}, 
   \begin{align*}
     P_{2N}:\mathcal{C}^\infty(M)\to\mathcal{C}^\infty(M),
   \end{align*} 
   arises as residues of the scattering operator 
   $S(s):\mathcal{C}^\infty(M)\to\mathcal{C}^\infty(M)$, \cite{GZ}. 
   The latter one is a generalization of the intertwining 
   integral operators $\mathbb{T}_{\nu,(1,0)}:\mathcal{C}^\infty(\R^n)\to\mathcal{C}^\infty(\R^n)$ 
   considered on $0$-forms, see \eqref{eq:KnappSteinForFunctions}. Since each $P_{2N}$ is polynomial in 
   second order differential operators $M_{2k}$ \cite{J}, $k=0,\ldots,2N$, does this structure extend to $S(s)$? 
  % Or should one look at the integral operators introduced in \cite{KobayashiSpeh}. 
   Explicit formulae for those second order differential operators exist on flat and on Einstein manifolds. 
 \end{remark}
 
 \begin{remark}{(\bf Structure of Branson-Gover operators and conformal powers of the Dirac operator)}
   The Branson-Gover operators \cite{BransonGover}, again conjectural, and 
   conformal powers of the Dirac operator \cite{GMP1} are also residues of certain scattering operators. 
   Less is known about their structure. One might expect that some aspects of our 
   results here in the model case carry over to the case of Riemannian manifolds and conformal 
   geometry here; in particular the nature of a scattering operator should resemble our convolutions with Riesz kernels.  
 \end{remark}
 
 \begin{remark}
It is clear that given the large amount of analysis based on the Riesz distributions on functions,
one may expect similar applications (e.g. fundamental solutions, wave equations, heat equations,
$L^p$-mapping properties)
 of our formulas here; we plan to consider some of these
applications in another paper.
  Along similar lines: can one study heat equations for the integral kernels $\mathbb{T}_{\nu,(A_\nu,B_\nu)}$? 
 \end{remark}

\end{document}